\newtheorem*{remark1}{Remarks}
\newtheorem{thm}{Theorem}[section]
\newtheorem{prop}[thm]{Proposition}
\newtheorem{lem}[thm]{Lemma}
\newtheorem{cor}[thm]{Corollary}
\newtheorem{ex}[thm]{Example}
\def\N{\mathbb{N}}
\def\N{\mathbb{N}}
\def\PP{\mathcal{P}}
\def\UU{\mathcal{U}}
\def\diam{\text{\rm diam}}
\def\Leb{\text{\rm Leb}}
\def\over{\overline{\rm mdim}_{M}(T,X,d)}
\def\logf{\log\frac{1}{\epsilon}}
\theoremstyle{definition}
\numberwithin{equation}{section}
\begin{document}


\baselineskip=17pt


\title{Measure-theoretic metric mean dimension}

\author{Rui Yang$^{1,2}$\\
E-mail: zkyangrui2015@163.com
\and 
Ercai Chen$^{2}$\\
E-mail: ecchen@njnu.edu.cn
\and  Xiaoyao Zhou$^2$\footnote{corresponding author}\\
E-mail: zhouxiaoyaodeyouxian@126.com
\and  1.School of Mathematics and Statistics,\\ 
Key Laboratory of Nonlinear Analysis and its Applications,\\ Ministry of Education,\\ Chongqing University, Chongqing, 401331, P.R.China\\ 
2.School of Mathematical Sciences and Institute of Mathematics, \\Ministry of Education Key Laboratory of NSLSCS,\\ Nanjing Normal University, Nanjing, 210023, Jiangsu, P.R.China}

\date{}

\maketitle


\renewcommand{\thefootnote}{}

\footnote{2020 \emph{Mathematics Subject Classification}: 37A15, 37C45}

\footnote{\emph{Key words and phrases}: Metric mean dimension, Measure-theoretic metric mean dimension, Measure-theoretic $\epsilon$-entropies, Generic points, Variational principle.}

\renewcommand{\thefootnote}{\arabic{footnote}}
\setcounter{footnote}{0}


\begin{abstract}
For infinite measure-theoretic entropy systems, we introduce the notion of measure-theoretic metric mean dimension of invariant measures for different types of measure-theoretic $\epsilon$-entropies,  and show that measure-theoretic metric mean dimensions of  different types of measure-theoretic $\epsilon$-entropies  coincide with the packing metric mean dimension  of  the set of generic points of ergodic measures.
\end{abstract}

\section{Introduction}

~~~~By a pair $(X,T)$ we mean a topological dynamical system (TDS for short), where $X$ is a compact  metric space  with  metric $d$ and  $T:X \rightarrow X$ is a homeomorphism.
By $M(X),M(X,T),E(X,T)$  we denote  the sets of all Borel probability measures on $X$ endowed with the weak$^{*}$-topology, all $T$-invariant Borel probability measures on $X$,  and all ergodic measures on $X$, respectively. 

Mean topological dimension introduced by Gromov \cite{gromov} is a new  topological invariant for topological dynamical systems, which is later proved to  be a powerful tool  to deal with the embedding problems of dynamical systems \cite{lw00, g15,glt16,g17,gt20}.  In 2000, to connect mean dimension  theory and (infinite) entropy theory, Lindenstrauss and Weiss \cite{lw00} introduced the notion of metric mean dimension by  dynamicalizing  the definition of  Minkowski dimension  that is originated from  fractal geometry. This quantities measures how fast $\epsilon$-topological entropy diverges to infinite topological entropy. Although metric mean dimension plays a vital role in  understanding the  complexity of TDSs with  infinite topological entropies, an appropriate notion for infinite measure-theoretic entropy  is absent.  Notice that there are some  
measure-theoretic $\epsilon$-entropies, for instances, Kolmogorov-Sinai $\epsilon$-entropy \cite{w82,p97,gs20}, Brin-Katok $\epsilon$-entropy \cite{bk83},  Katok $\epsilon$-entropy \cite{k80} and  other $\epsilon$-measure entropies \cite{s07,ps07}, being considered to characterize measure theoretic entropy of invariant measures. Namely, the limits of these different measure-theoretic $\epsilon$-entropies are always equal to measure-theoretic entropy of ergodic measures as $\epsilon$ tends $0$. Then a natural question is the following:

{Question:} For   a given TDS with infinite measure-theoretic entropy, do these different measure-theoretic $\epsilon$-entropies of  ergodic measures have the same ``divergence speed'' ?

To this end, in the present paper we introduce the notion of measure-theoretic metric mean dimension of invariant measures for different types of measure-theoretic $\epsilon$-entropies. More precisely, 
given   a TDS  $(X,T)$ with the metric $d$, an invariant measure $\mu \in M(X,T)$, and a measure-theoretic $\epsilon$-entropy  $F(\mu,\epsilon) \in \mathcal{D}$, we  respectively define  \emph{upper and lower measure-theoretic metric mean dimensions of $\mu$} with respect to $F$ as
\begin{align*}
\overline{\rm {mdim}}_M(F,\mu)&=\limsup_{\epsilon \to 0} \frac{F(\mu,\epsilon)}{\logf},\\
\underline{\rm {mdim}}_M(F,\mu)&=\liminf_{\epsilon \to 0} \frac{F(\mu,\epsilon)}{\logf},
\end{align*}
where $F(\mu,\epsilon)$ is chosen from the candidate set
$$\mathcal{D}= \left\{\underline{h}_\mu^K(T,\epsilon,\delta),\overline{h}_\mu^K(T,\epsilon,\delta),\underline{h}_\mu^K(T,\epsilon),  \overline{h}_\mu^K(T,\epsilon), \overline{h}_\mu^{BK}(T,\epsilon),\atop  \mathcal P_{\mu}(T,d,\epsilon), 
PS_\mu(T,\epsilon),  R_{\mu,L^{\infty}}(\epsilon),\inf_{\diam(P) \leq \epsilon}\limits h_\mu(T,P),\inf_{\diam (\UU)\leq \epsilon} \limits h_\mu^S(\mathcal{U}) \right\},$$
and the precise definitions of all  candidates  involved in $\mathcal{D}$  are  given in Subsection \ref{sub 2.2}.

Intuitively, it may happen that different  choice from the candidate set $\mathcal{D}$  shall lead to  different ``divergence speeds''. However,  the following theorem suggests that  the above definitions are well-defined, which do not  depend on  the choice of $F(\mu,\epsilon)$  from a long list of possible candidates, each being a notion of a dynamical ``$\epsilon$-entropy'' of $\mu$. 
\begin{thm}\label{thm 1.1}
Let $(X,T)$ be a TDS  with a metric $d$ and $\mu \in E(X,T)$. Then  for every $F(\mu,\epsilon)\in \mathcal{D}$,
\begin{align*}
\limsup_{\epsilon \to 0}\frac{F(\mu,\epsilon)}{\logf}&=\overline{\rm {mdim}}_M^P(T,G_\mu,d),\\
\liminf_{\epsilon \to 0}\frac{F(\mu,\epsilon)}{\logf}&=\underline{\rm {mdim}}_M^P(T,G_\mu,d),
\end{align*}
$G_\mu:=\{x\in X: \lim\limits_{n\to  \infty}\frac{1}{n}\sum_{j=0}^{n-1}\delta_{T^{j}(x)}=\mu\}$  denotes the set of the generic points of $\mu$, and  $\underline{\rm {mdim}}_M^P(T,G_\mu,d),  \overline{\rm {mdim}}_M^P(T,G_\mu,d)$ denote  packing upper and lower  metric mean dimensions of $G_{\mu},$ respectively. 
\end{thm}

Since  Lindenstrauss-Tsukamoto's  pioneering work \cite{lt18},  the formulation of  variational principles for metric mean dimension  in terms of some  measure-theoretic $\epsilon$-entropies has received  abundant attentions.  Actually, there has been some  variational  principles obtained by  several authors  either  in  terms of invariant measures \cite{lt18,vv17} or in terms of ergodic measures \cite{gs20,shi,ycz23}. By means of the relations of those different types of measure-theoretic $\epsilon$-entropies  presented in Lemmas \ref{lem 3.1} and \ref{lem 3.2}, it is a good chance for us to  give a systemic  summary for these results in   a unified way for future reference. 
\begin{thm}\label{thm 1.2}
Let $(X,T)$ be a TDS  with a metric $d$. Then 
\begin{align*}
\over&=\limsup_{\epsilon \to 0}\frac{1}{\logf}\sup_{\mu \in E(X,T)}F(\mu,\epsilon)\\
&=\limsup_{\epsilon \to 0}\frac{1}{\logf}\sup_{\mu \in M(X,T)}F(\mu,\epsilon),
\end{align*}
where $F(\mu,\epsilon)$ is chosen from the candidate set $$\mathcal{D}_1=\{ R_{\mu,L^{\infty}}(\epsilon),\inf_{\diam(P) \leq \epsilon}\limits h_\mu(T,P)\}.$$
\end{thm}

\begin{thm}\label{thm 1.3}
Let $(X,T)$ be a TDS  with a metric $d$. Then 
\begin{align*}
\over=\limsup_{\epsilon \to 0}\frac{1}{\logf}\sup_{\mu \in M(X)}\underline{h}_\mu^{BK}(T,\epsilon),
\end{align*}
and 
\begin{align*}
\over&=\limsup_{\epsilon \to 0}\frac{1}{\logf}\sup_{\mu \in E(X,T)}F(\mu,\epsilon)\\
&=\limsup_{\epsilon \to 0}\frac{1}{\logf}\sup_{\mu \in M(X,T)}F(\mu,\epsilon)\\
&=\limsup_{\epsilon \to 0}\frac{1}{\logf}\sup_{\mu \in M(X)}F(\mu,\epsilon),
\end{align*}
where $F(\mu,\epsilon)$ is chosen from the candidate set 
$$\mathcal{D}_2= \left\{\underline{h}_\mu^K(T,\epsilon,\delta),\overline{h}_\mu^K(T,\epsilon,\delta),\underline{h}_\mu^K(T,\epsilon),  \overline{h}_\mu^K(T,\epsilon), \atop \overline{h}_\mu^{BK}(T,\epsilon),  \mathcal P_{\mu}(T,d,\epsilon), 
PS_\mu(T,\epsilon),  \inf_{\diam (\UU)\leq \epsilon} \limits h_\mu^S(\mathcal{U},\delta) \right\}.$$
\end{thm}

We  remark that Theorems  \ref{thm 1.2} and \ref{thm 1.3} are also valid for lower metric mean dimension of $X$. Besides, we would like to  mention that  Gutman and  \'Spiewak  in \cite[Problem 3]{gs20} posed a question whether  variational principles hold for  Brin-Katok $\epsilon$-entropy or not. Later, by means of some local variational principles established in local entropy theory,  Shi \cite[Theorem 5.2]{shi}  gave an affirmative  answer for lower Brin-Katok $\epsilon$-entropy $\overline{h}_\mu^{BK}(T,\epsilon)$, and asked   the same question \cite[Problem 1]{shi} for upper Brin-Katok $\epsilon$-entropy  $\underline{h}_\mu^{BK}(T,\epsilon)$.  Theorem  \ref{thm 1.3}  at least tells us the partial answer, which holds  for any  dynamical systems in terms of Borel probability measures. There also exists some certain  infinite entropy systems, for instance, the system $([0.1]^{\mathbb{Z}},\sigma)$ with the left shift  $\sigma$ on the unit cube $[0.1]^{\mathbb{Z}}$, the answer is positive.  However,  how to verify  that the  supremum  of the variational principle can take  the set of all ergodic measures  for any TDS   is still   an open problem.

The rest of this paper is organized as follows. In section 2,  we recall the definitions of metric mean  dimensions  and collect several types of  measure-theoretic  $\epsilon$-entropies. In  section 3, we prove  Theorems  \ref{thm 1.1}, \ref{thm 1.2} and \ref{thm 1.3}. In section 4, we give some applications of Theorem \ref{thm 1.1}. In section 5, we  present some open question suggested by our main results.

\section{Preliminary}
~~~~In this section, we recall the definitions of metric mean  dimensions,  and collect several types of  measure-theoretic  $\epsilon$-entropies related to measure-theoretic entropy of invariant measures.
\subsection{Three types of metric mean dimensions}
We recall the three types of metric mean dimensions defined by $\epsilon$-topological entropy \cite{lw00}, $\epsilon$-Bowen topological entropy \cite{w21} and $\epsilon$-packing topological entropy \cite{ycz22}.
\subsubsection{Metric mean dimension}
~~~~Let  $n\in \mathbb{N}$, $x,y \in X$, $\epsilon >0$. We define the $n$-th Bowen metric $d_n$  on $X$ as $d_n(x,y):=\max_{0\leq j\leq n-1}\limits d(T^{j}(x),T^j(y)).$ Then  {the Bowen open  ball and closed ball} of $x$ with radius $\epsilon$ and  order $n$ in the metric $d_n$ are  given by 
$$B_n(x,\epsilon)=\{y\in X: d_n(x,y)<\epsilon\},$$
$$\overline B_n(x,\epsilon)=\{y\in X:d_n(x,y)\leq\epsilon\},$$ respectively.

For a non-empty subset $Z\subset X$,  a set $F\subset Z$ is  \emph{an $(n,\epsilon)$-separated set of $Z$} if $d_n(x,y)>\epsilon$ for any $x,y \in F$ with $x\not= y$. Denote by $s_n(T,d,\epsilon,Z)$ the maximal  cardinality  of $(n,\epsilon)$-separated sets of $Z$. We  define the $\epsilon$-topological entropy of $X$ as
$$h_{top}(T, Z,d,\epsilon)=\limsup_{n\to \infty} \frac{1}{n} \log s_n(T,d,\epsilon,Z).$$
The \emph{upper metric mean dimension} of  $Z$  is defined by 
\begin{align*}
\overline{\rm mdim}_{M}(T,Z,d)= \limsup_{\epsilon\to 0}\frac{h_{top}(T,Z,d,\epsilon)}{\log \frac{1}{\epsilon}}.
\end{align*}

When  $Z\subset X$ is not  a $T$-invariant  or compact set, motivated by the Bowen topological entropy  \cite{b73} and packing entropy  on subsets \cite{fh12} one can define so-called Bowen  metric mean dimension  \cite{w21,cls21} and packing  metric mean dimension on subsets \cite{ycz22} by means of Carath\'eodory-Pesin structures \cite{p97}. 

\subsubsection{Bowen metric mean dimension}

~~~~Let  $Z\subset X$, $\epsilon>0$, $N\in \mathbb{N}$,  and $s \geq 0$.  
Set
$$M(T,d,Z,s,N,\epsilon)=\inf\sum_{i\in I}\limits  e^{-n_i s},$$
where the infimum  is taken over all  finite or countable covers $\{B_{n_i}(x_i,\epsilon)\}_{i\in I}$ of $Z$ with $n_i \geq N,x_i \in X.$

Since $M(T,d,Z,s,N,\epsilon)$ is non-decreasing when $N$ increases, so the limit $M(T,d,Z,s,\epsilon)=\lim\limits_{N\to \infty}M(T,d,Z,s,N, \epsilon)$ exists. There is a  critical value  of parameter $s$  so that   $M(T,d,Z,s, \epsilon)$ jumps  from $\infty$ to $0$.  The  critical value is defined by 
\begin{align*}
h_{top}^B(T,Z,d,\epsilon):&=\inf\{s:M(T,d,Z,s,\epsilon)=0\}\\
&=\sup\{s:M(T,d,Z,s,\epsilon)=\infty\}.
\end{align*}

We define the  \emph{Bowen upper metric mean dimension of  $Z$} as 
\begin{align*}
\overline{\rm {mdim}}_M^B(T,Z,d)&=\limsup_{\epsilon \to 0}\frac{h_{top}^B(T,Z,d,\epsilon)}{\log \frac{1}{\epsilon}}.
\end{align*}

\subsubsection{Packing metric mean dimension}
~~~~Let  $Z\subset X$, $\epsilon>0$, $N\in \mathbb{N}$, and  $s \geq 0$.  Set
$$P(T,d,Z,s,N,\epsilon)=\sup\sum_{i\in I}\limits  e^{-n_i s},$$
where the supremum is taken over all  finite or countable  pairwise disjoint  closed  ball  families $\{\overline B_{n_i}(x_i,\epsilon)\}_{i\in I}$ of $Z$ with $n_i \geq N, x_i\in Z.$

Since $P(T,d,Z,s,N,\epsilon)$ is non-increasing as $N$ increases, so the limit
$P(T,d,Z,s,\epsilon)=\lim_{N \to \infty }\limits P(T,d,Z,s,N,\epsilon)$ exists.

Set
$$\mathcal P(T,d,Z,s,\epsilon)=\inf\left\{\sum_{i=1}^{\infty}P(T,d,Z_i,s,\epsilon): \cup_{i\geq 1}Z_i \supseteq Z \right\}.$$
There is  a  critical value  of parameter $s$  so that    $\mathcal P(T,d,Z,s,\epsilon)$ jumps  from $\infty$ to $0$.
The  critical value is defined by 
\begin{align*}
h_{top}^P(T,Z,d,\epsilon):
&=\inf\{s: \mathcal P(T,d,Z,s,\epsilon)=0\}\\
&=\sup\{s: \mathcal P(T,d,Z,s,\epsilon)=\infty\}.
\end{align*}

We define \emph{packing upper metric mean dimension  of $T$ on the set $Z$} as
\begin{align*}
\overline{\rm {mdim}}_M^P(T,Z,d)
=\limsup_{\epsilon \to 0}\limits\frac{h_{top}^P(T,Z,d,\epsilon)}{\log \frac{1}{\epsilon}}.
\end{align*}

It is proved that in \cite[Proposition  3.4, (c)]{ycz22} if $Z$ is a non-empty subset of $X$, then
$$\overline{\rm {mdim}}_M^B(T,Z,d)\leq \overline{\rm {mdim}}_M^P(T,Z,d)\leq \overline{\rm {mdim}}_M(T,Z,d).$$
Additionally,   the equalities hold if $Z$ is  a $T$-invariant compact subset of $X$.  

We list some  remarks about metric mean dimension.

\begin{remark1}
\begin{enumerate}[label=\upshape(\roman*), leftmargin=*, widest=iii]
\item  Metric mean dimension  can be  regarded as  a quantity to characterize the topological complexity of  infinite entropy systems, which measures the divergent rate of $\epsilon$-topological entropy as $\epsilon \to 0$.  More precisely, one may think of $\epsilon$-topological entropy 
$$h_{top}(T,X,d,\epsilon) \approx \overline{\rm mdim}_{M}(T,X,d)\cdot \logf$$  for  sufficiently  small $\epsilon >0$, where $\logf$ is the ``standard unit''.\label{it:1}
\item Unlike  the classical topological entropy, metric mean dimension  depends on the choice of compatible metrics of $X$. Some explicit examples can be  constructed  by using the fact  \rm{\cite[Theorem 5]{vv17}} if  $(K,d)$ is a   compact metric space,  then
$\overline{\rm mdim}_M(\sigma, K^{\mathbb{Z}},d)=\overline{{\rm dim}}_B(K,d),$
where  $\sigma$ is the left shift on $(K^{\mathbb{Z}},d)$ with a metric $\tilde{d}(x,y)=\sum_{n\in \mathbb{Z}}\frac{d(x_n,y_n)}{2^{|n|}}$ compatible with the product topology, and  $ \overline{{\rm dim}}_B(K,d)$  is the upper box dimension of $K$ w.r.t.  $d$.  
Consider  $X=\{0,1\}^{\mathbb{{N}}}$ equipped with product topology, which is compatible with the metrics
$$d_1(x,y)=\sum_{n\in \mathbb{N}}\frac{|x_n-y_n|}{2^{n}}, d_2(x,y)=\sum_{n\in \mathbb{N}}\frac{|x_n-y_n|}{3^{n}}.$$

Then by using  the result that we mention, one has
\begin{align*}
\overline{\rm mdim}_M(\sigma, X^{\mathbb{Z}},\tilde{d}_1)&=\overline{{\rm dim}}_B(X,d_1)=1,\\
\overline{\rm mdim}_M(\sigma, X^{\mathbb{Z}},\tilde{d}_2)&=\overline{{\rm dim}}_B(X,d_2)=\frac{\log2}{\log3},\\
\overline{\rm mdim}_M(\sigma, X^{\mathbb{Z}},\tilde{d}_1)&\not= \overline{\rm mdim}_M(\sigma, X^{\mathbb{Z}},\tilde{d}_2), 
\end{align*} 
where $\tilde{d}_1(x,y)=\sum_{n\in \mathbb{Z}}\frac{d_1(x_n,y_n)}{2^{|n|}}$, $\tilde{d}_2(x,y)=\sum_{n\in \mathbb{Z}}\frac{d_2(x_n,y_n)}{2^{|n|}}$. Let $id: (\sigma, X^{\mathbb{Z}},\tilde{d}_2) \rightarrow (\sigma, X^{\mathbb{Z}},\tilde{d}_1)$ be the identity mapping. This is exactly  a conjugation mapping.   It confirms that metric mean dimension  varies  with the metrics  and is not  a topological invariant.

\item Suppose that $X,Y$ are two compact metrizable spaces. Let $\pi: X\rightarrow Y$ be a conjugation mapping between  two TDSs $(X,T)$ and $(Y,S)$,  where $Y$ is endowed with a compatible metric $d$. Then   one has
$$\overline{\rm {mdim}}_M(T,X,\widehat{d})=\overline{\rm {mdim}}_M(S,Y,d),$$
where  $\widehat{d}(x,y):=d(\pi x, \pi y)$,  defined for any $x,y\in X$, is compatible  with the topology of $X$. In other words, for any  conjugation mapping between two TDSs,  once we are given  one of a TDS with  a  compatible metric,  one can  find another compatible metric for its extension system (or factor system) such that they share the same metric mean dimensions.

\item  If  $\pi:(X, T, d_1) \mapsto (Y,S,d_2)$ is a bi-Lipschitz conjugation map,  then $\overline{\rm {mdim}}_M(T,X,{d}_1)=\overline{\rm {mdim}}_M(S,Y,d_2)$. Therefore,    bi-Lipschitz  conjugation mapping  between two TDSs preserves their  metric mean  dimensions. Specially,  the metric mean dimensions of $X$ do not change whenever the  metric is taken over the set of all   compatible  metrics such that  the identity mapping $id$ on $X$ is a  bi-Lipschitz  mapping. 
\end{enumerate}
\end{remark1}

\subsection{Measure-theoretic $\epsilon$-entropies} \label{sub 2.2}

~~~~In this subsection,  we collect some  candidates for measure-theoretic $\epsilon$-entropy, which were used to describe  the measure-theoretic entropy of invariant  measures.

\subsubsection{Kolmogorov-Sinai $\epsilon$-entropy}

~~~Let $P$ be a  finite Borel measurable partition of $X$. By  $\diam (P):=\max_{A\in P}\diam A$ and $ P^n:=\vee_{j=0}^{n-1}T^{-j}P$  we denote  the diameter  of  $P$,  the $n$-th join of $P$, respectively.
Given an invariant measure $\mu \in M(X,T)$,  the measure-theoretic entropy   of $\mu$ w.r.t $P$ is defined  by 
$$h_{\mu}(T,P)=\lim\limits_{n \to \infty}\frac{1}{n}H_{\mu}(P^n),$$
where $H_{\mu}(P)$ is the  partition entropy of $P$.  The measure-theoretic entropy of $\mu$ is given by 
$$h_{\mu}(T)=\sup h_{\mu}(T,P),$$
where the supremum is taken over all finite Borel measurable partitions of $X$.

The  \emph{Kolmogorov-Sinai $\epsilon$-entropy of $\mu$} \cite[Section 3]{gs20} is  given by 
$$\inf_{\diam(P) \leq \epsilon} h_\mu(T,P),$$ 
where  the infimum is   taken over all finite Borel partitions of $X$ with diameter  at most $\epsilon$.

\subsubsection{Shapira's $\epsilon$-entropy}
~~~~Given  a finite open cover  $\mathcal{U}$ of $X$, we denote by $\diam(\mathcal{U})=\max_{U\in \mathcal{U}}\diam U$  the diameter of $\mathcal{U}$.
The \emph{Lebesgue number} of $\UU$, denoted by $\Leb(\mathcal{U})$,  is  the largest  positive number $\delta >0$ such that  every open ball of radius $\delta$ of $X$ is contained in  some element of $\UU$. If every atom of the finite Borel partition  $P$ of $X$ is  contained in some member of $\UU$,  in this case we write  $P\succ \UU$  to denote that  $P$  is  a refinement $\UU$.  Let $ \mathcal{U}^n:=\vee_{j=0}^{n-1}T^{-j}\mathcal{U}$  denote the $n$-th join of $\mathcal{U}$.

Given  $\mu \in M(X)$, $\delta \in (0,1)$ and  a  finite open cover $\mathcal{U}$  of $X$, we set
$$N_{\mu}(\mathcal{U},\delta):=\min\{\#\mathcal{F}:\mu(\cup_{A\in \mathcal{F}}A)\geq \delta,  \mathcal{F}~\text{is a subfamily of}~ \mathcal{U}\}.$$ 
We define the  Shapira's  $\epsilon$-entropy of $\mu$ as
$$h_\mu^S(\mathcal{U},\delta):=\limsup_{n\to \infty}\frac{\log N_{\mu}(\mathcal{U}^n,\delta)}{n}.$$

Shapira  \cite[Theorem 4.2]{s07}  proved  that  for every ergodic measure   $\mu \in E(X,T)$,   the limit
exists and is independent of  choice of $\delta \in (0,1)$, which we denote by $h_\mu^S(\mathcal{U})$ instead of $h_\mu^S(\mathcal{U},\delta)$.  Moreover, Shapira \cite[Theorem 4.4]{s07}  showed that $$h_\mu^S(\mathcal{U})=\inf_{P\succ \mathcal{U}}h_\mu(T,P),$$ 
where   the infimum  is taken over all  finite Borel measurable partitions  of $X$  that refines $\mathcal{U}$.
This  links  the Shapira's entropy with  the  measure-theoretic entropy of a fixed finite open cover.


\subsubsection{Brin-Katok $\epsilon$-entropy} 

~~~~Let  $\mu \in {M}(X)$ and $\epsilon >0$. We define  \emph{the upper and lower Brin-Katok local $\epsilon$-entropies of $\mu$} as
\begin{align*}
\overline{h}_{\mu}^{BK}(T, \epsilon):&=\int \limsup_{n\to \infty}-\frac{\log \mu (B_n(x,\epsilon))}{n}d\mu,\\
\underline{h}_{\mu}^{BK}(T, \epsilon):&=\int \liminf_{n\to \infty}-\frac{\log \mu (B_n(x,\epsilon))}{n}d\mu,
\end{align*}
respectively.

Brin and Katok \cite{k80} proved that for every $\mu \in M(X,T)$, one has $\lim_{\epsilon\to 0}\underline{h}_{\mu}^{BK}(T, \epsilon)=\lim_{\epsilon\to 0}\overline{h}_{\mu}^{BK}(T, \epsilon)=h_\mu(T).$

\subsubsection{Katok's $\epsilon$-entropies}
~~~~Let $\mu \in  {M}(X)$, $\epsilon>0$, $n \in\N$ and $\delta \in (0,1)$.
Put
$$R_\mu^\delta(T,n, \epsilon):=\min\{\#E: E\subset X  ~\text{and} ~\mu (\cup_{x\in E}B_n(x,\epsilon))\geq 1-\delta \}.$$
We define the \emph{upper and lower Katok's $\epsilon$-entropies  of $\mu$} as
\begin{align*}
\overline{h}_{\mu}^K(T,\epsilon, \delta)&=\limsup_{n\to \infty} \frac{1}{n} \log R_\mu^\delta(T,n, \epsilon),\\
\underline{h}_{\mu}^K(T,\epsilon, \delta)&=\liminf_{n\to \infty} \frac{1}{n} \log R_\mu^\delta(T,n, \epsilon),
\end{align*}
respectively.

Notice that  the quantities $\overline{h}_{\mu}^K(T,\epsilon, \delta),\underline{h}_{\mu}^K(T,\epsilon, \delta)$ are  increasing as $\delta$ decreases. An alternative  way to define the \emph{upper and lower Katok's $\epsilon$-entropies  of $\mu$} is  the following:
\begin{align*}
	\overline{h}_{\mu}^K(T,\epsilon):=\lim_{\delta \to 0}\overline{h}_{\mu}^K(T, \epsilon, \delta),~~
	\underline{h}_{\mu}^K(T,\epsilon):=\lim_{\delta \to 0}\underline{h}_{\mu}^K(T,\epsilon, \delta).
\end{align*}
If $\mu \in E(X,T)$, Katok \cite{k80} showed  that  for every $\delta \in(0,1)$, one has
$\lim_{\epsilon \to 0}\overline{h}_{\mu}^K(T,\epsilon, \delta)=\lim_{\epsilon \to 0}\underline{h}_{\mu}^K(T,\epsilon, \delta)=h_{\mu}(T).$

Similar to  the definitions of Bowen and packing topological entropies on  subsets,  one can employ Carath\'eodory-Pesin structures to define  the \emph{dimensional types of  Katok entropy-like $\epsilon$-entropies}.

Let $\epsilon>0, s \geq 0, N\in \mathbb{N}, \mu \in M(X)$ and $\delta \in (0,1)$. 
Put
$$M^\delta (T,d,\mu,s,N,\epsilon)=\inf\{\sum_{i\in I}\limits  e^{-n_i s}\},$$
where the infimum  is taken over all  finite or countable covers $\{B_{n_i}(x_i,\epsilon)\}_{i\in I}$ so  that  $\mu (\cup_{i\in I}B_{n_i}(x_i,\epsilon))\geq 1-\delta$  with $n_i \geq N,  x_i\in X$.

We define $M^\delta (T,d,\mu,s,\epsilon)=\lim\limits_{N\to \infty}M^\delta(T,d,\mu,s,N, \epsilon).$ There is   a critical value  of parameter $s$  so that  the quantity  $M^\delta(T,d,\mu,s, \epsilon)$ jumps  from $\infty$ to $0$.  We define the critical value as
\begin{align*}
M^\delta(T,d,\mu,\epsilon):&=\inf\{s:M^\delta(T,d,\mu,s,\epsilon)=0\}\\
&=\sup\{s:M^\delta(T,d,\mu,s,\epsilon)=\infty\}.
\end{align*}

We put $M_{\mu}(T, d, \epsilon)=\lim_{\delta \to 0 }\limits M^\delta(T,d,\mu,\epsilon)$.

 Set
$$\mathcal P^\delta(T,d,\mu,s,\epsilon)=\inf\left\{\sum_{i=1}^{\infty}P(T,d, Z_i, s,\epsilon): \mu(\cup_{i\geq 1}Z_i)\geq 1-\delta \right\}.$$
There is a  critical value  of parameter $s$  so that  the quantity  $\mathcal P^\delta(T,d,\mu,s,\epsilon)$ jumps  from $\infty$ to $0$. 
We  define the critical value as
\begin{align*}
\mathcal P^\delta(T,d,\mu,\epsilon):
&=\inf\{s: \mathcal P^\delta(T,d,\mu,s,\epsilon)=0\}\\
&=\sup\{s: \mathcal P^\delta(T,d,\mu,s,\epsilon)=\infty\}.
\end{align*}

We put  $\mathcal P_{\mu}(T,d,\epsilon)=\lim_{\delta \to 0}\limits \mathcal P^\delta(T,d,\mu,\epsilon)$.

\subsubsection{Pfister and Sullivan's  $\epsilon$-entropy}

~~~~Let $\mu \in M(X)$ and $\epsilon>0$.
Define 
$$PS_\mu(T,d,\epsilon)=\inf_{F\ni \mu}\limsup_{n \to \infty}\frac{1}{n}\log s_n(T,d,\epsilon,X_{n,F}),$$
where $X_{n,F}=\{x\in X: \frac{1}{n}\sum_{j=1}^{n-1} \delta_{T^{j}(x)} \in F\}$ and  the infimum  is taken over  all neighborhoods  $F$  in $M(X)$ of $\mu$.

Pfister and Sullivan  \cite{ps07} proved that $h_{\mu}(T)=\lim_{\epsilon \to 0}\limits PS_\mu(T,d,\epsilon)$ for every $\mu \in E(X,T)$.

\subsubsection{$L^{\infty}$ rate-distortion function}\label{sub 2.2.5}
~~~~ This quantity comes  from information theory.
Let $\mu \in M(X,T)$. Recall that  \emph{upper  and lower $L^{\infty}$ rate-distortion dimensions of $\mu$, defined by $L^{\infty}$ rate-distortion function $R_{\mu,L^{\infty}}(T, \epsilon)$}, are respectively given by 
\begin{align*}
\overline{\rm rdim}_{L^{\infty}}(T,X,d,\mu)&=\limsup_{\epsilon \to 0}\frac{R_{\mu,L^{\infty}}(T, \epsilon)}{\log \frac{1}{\epsilon}},\\
\underline{\rm rdim}_{L^{\infty}}(T,X,d,\mu)&=\liminf_{\epsilon \to 0}\frac{R_{\mu,L^{\infty}}(T,\epsilon)}{\log \frac{1}{\epsilon}}.
\end{align*}

Since the forthcoming  proof does not refer to the  definition of $R_{\mu,L^{\infty}}(T,\epsilon)$, so  we  omit its  lengthy  definition and  refer readers to \cite{ct06, lt18} for the details.

\section{Proofs of  main results}

~~~~This section is devoted to proving  the Theorems \ref{thm 1.1}, \ref{thm 1.2} and \ref{thm 1.3}.

According to  the definition of measure-theoretic metric mean dimension, we need to find out the relationships  of these different types of measure-theoretic $\epsilon$-entropies, so the Bowen's \cite{b73}  techniques involving the measure-theoretic entropy do not work in current situation. The  Shannon-McMillan-Breiman theorem, Brin-Katok entropy formula, Shapira's theorems and dimensional approach  are the main ingredients for paving  the way for the proof  of Theorem 1.1. Then   the Theorem 1.1  directly  follows from the following Lemmas \ref{lem 3.1} and  \ref{lem 3.2}.
\begin{lem}\label{lem 3.1}
Let $(X,T)$ be a TDS  with a metric $d$ and $\mu \in E(X,T)$. Then for every $\delta \in (0,1)$ and $\epsilon>0$, we have
\begin{align*}
&\overline{h}_\mu^{BK}(T,2\epsilon)\leq\inf_{\diam(P) \leq \epsilon}h_\mu(T,P)\leq\inf_{\diam (\UU)\leq \epsilon}h_\mu^S(\mathcal{U})
\leq \underline{h}_\mu^{K}(T,\frac{\epsilon}{4},\delta)\\
\leq& \overline{h}_\mu^{K}(T,\frac{\epsilon}{4},\delta)
\leq \underline{h}_\mu^{K}(T,\frac{\epsilon}{32})
\leq 
\overline{h}_\mu^{K}(T,\frac{\epsilon}{32})\leq \overline{h}_\mu^{BK}(T,\frac{\epsilon}{64}).
\end{align*}
\end{lem}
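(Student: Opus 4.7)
The proof proceeds link by link along the chain, using four main ingredients: the Shannon--McMillan--Breiman (SMB) theorem, Shapira's identity $h_\mu^S(\UU) = \inf_{P \succ \UU} h_\mu(T,P)$, the classical maximal $(n,\epsilon)$-separated set covering argument, and, for one crucial step, the ergodicity of $\mu$ exploited through the $T$-invariance of a limsup function.

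For the initial three inequalities the strategy is direct. For any partition $P$ with $\diam(P)\leq \epsilon$ one has $P^n(x) \subset \overline{B}_n(x,\epsilon) \subset B_n(x,2\epsilon)$, so $\mu(B_n(x,2\epsilon)) \geq \mu(P^n(x))$; applying SMB pointwise and integrating yields the first inequality. The second is immediate from Shapira's identity because any refinement $P \succ \UU$ satisfies $\diam(P)\leq\diam(\UU)$. For the third, I fix an $\epsilon/4$-net $\{x_i\}$ and take $\UU = \{B(x_i,\epsilon/2)\}$; then $\diam(\UU)\leq\epsilon$ and $\Leb(\UU)\geq\epsilon/4$, so each Bowen ball $B_n(x,\epsilon/4)$ sits inside a single atom of $\UU^n$. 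Replacing balls by covering atoms in an optimal $(n,\epsilon/4)$-spanning family shows $N_\mu(\UU^n,1-\delta)\leq R_\mu^\delta(T,n,\epsilon/4)$; dividing by $n$ and invoking Shapira's theorem gives $h_\mu^S(\UU) \leq \underline{h}_\mu^K(T,\epsilon/4,\delta)$. The two $\liminf \leq \limsup$ inequalities in the chain are trivial.

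The step $\overline{h}_\mu^K(T,\epsilon/4,\delta)\leq\underline{h}_\mu^K(T,\epsilon/32)$ is the main obstacle and requires combining the above ideas at carefully chosen intermediate scales. I plan first to show that for any partition $P$ with $\diam(P)\leq\epsilon/8$ and any $\tau>0$, $\overline{h}_\mu^K(T,\epsilon/4,\tau)\leq h_\mu(T,P)+\tau$: SMB combined with Egorov produces a set $X_\tau$ of measure $>1-\tau$ and an integer $N$ such that $\mu(\overline{B}_n(x,\epsilon/8))\geq e^{-n(h_\mu(T,P)+\tau)}$ for $x\in X_\tau$ and $n\geq N$, and then a maximal $(n,\epsilon/4)$-separated set $F\subset X_\tau$ has $|F|\leq e^{n(h_\mu(T,P)+\tau)}$ by disjointness of its Bowen $\epsilon/8$-balls, while maximality forces $X_\tau$ to be covered by Bowen $(\epsilon/4+\eta)$-balls around $F$. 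Taking the infimum over such $P$ and composing with the second and third inequalities rerun at scale $\epsilon/8$ (whose associated Lebesgue-number construction lands exactly at $\epsilon/32$) produces $\overline{h}_\mu^K(T,\epsilon/4,\tau)\leq \underline{h}_\mu^K(T,\epsilon/32)+\tau$; choosing $\tau\leq\delta$ and letting $\tau\to 0$ closes the step.

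Finally, for $\overline{h}_\mu^K(T,\epsilon/32)\leq\overline{h}_\mu^{BK}(T,\epsilon/64)$ I use the ergodicity of $\mu$ in an essential way. Setting $f(x):=\limsup_n -\frac{1}{n}\log\mu(B_n(x,\epsilon/64))$, the inclusion $T^{-1}B_n(Tx,\epsilon/64) \supset B_{n+1}(x,\epsilon/64)$ combined with $T$-invariance of $\mu$ implies $f\circ T\leq f$ pointwise; equality of the integrals then forces $f\circ T = f$ $\mu$-a.e., and ergodicity makes $f$ $\mu$-a.e. equal to the constant $\overline{h}_\mu^{BK}(T,\epsilon/64)$. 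Egorov now supplies, for any $\tau,\tau'>0$, a set $X^*$ with $\mu(X^*)>1-\tau'$ and an $N$ beyond which $\mu(B_n(x,\epsilon/64))\geq e^{-n(\overline{h}_\mu^{BK}(T,\epsilon/64)+\tau)}$ for $x\in X^*$; the same maximal $(n,\epsilon/32)$-separated argument then yields the bound. The delicate point throughout is choreographing the various scale factors so that each covering-to-spanning conversion lands precisely at the constants $\epsilon/4,\epsilon/8,\epsilon/32,\epsilon/64$ appearing in the statement.
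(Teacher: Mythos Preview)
Your proof is correct; the first three links, and the trivial $\liminf\le\limsup$ steps, coincide with the paper's argument essentially verbatim. The differences are in the last two nontrivial links.

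For $\overline{h}_\mu^{K}(T,\epsilon/4,\delta)\le \underline{h}_\mu^{K}(T,\epsilon/32)$ the paper is more economical: it takes a single cover $\UU$ with $\diam(\UU)\le\epsilon/8$ and $\Leb(\UU)\ge\epsilon/32$ and sandwiches directly,
\[
R_\mu^{\delta}(T,n,\tfrac{\epsilon}{4})\le N_\mu(\UU^n,1-\delta)\quad\text{and}\quad N_\mu(\UU^n,1-\delta')\le R_\mu^{\delta'}(T,n,\tfrac{\epsilon}{32}),
\]
the first because every element of $\UU^n$ has $d_n$-diameter at most $\epsilon/8$, the second because of the Lebesgue-number bound. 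Shapira's Theorems~4.2 and~4.4 give that $\frac{1}{n}\log N_\mu(\UU^n,1-\delta)$ converges to a limit $h_\mu^S(\UU)$ independent of $\delta$, so the limsup on the left and the liminf on the right are both compared to the \emph{same} number; this is what lets $\delta$ disappear without the extra $+\tau$ and the limiting argument you set up. Your route through SMB, Egorov, and a maximal separated set to reach $\inf_{\diam(P)\le\epsilon/8}h_\mu(T,P)$, followed by re-running inequalities (ii) and (iii) at scale $\epsilon/8$, is a valid detour that lands at the same constant $\epsilon/32$, but it does more work and introduces the open/closed ball $\eta$-fudge you noted; the paper's sandwich avoids both.

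For the final link $\overline{h}_\mu^{K}(T,\epsilon/32)\le\overline{h}_\mu^{BK}(T,\epsilon/64)$ the paper simply cites \cite[Proposition~3.21]{ycz22}. Your self-contained argument---showing $f(x)=\limsup_n -\tfrac{1}{n}\log\mu(B_n(x,\epsilon/64))$ satisfies $f\circ T\le f$, hence is $\mu$-a.e.\ constant by ergodicity, then using a uniform set plus a maximal $(n,\epsilon/32)$-separated set---is essentially the proof behind that citation, so you gain self-containment at the cost of length.
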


\begin{proof}
Fix $\epsilon >0$ and $\delta \in (0,1)$. Let $P$ be a finite Borel measurable partition of $X$ with diameter at most $\epsilon$, and $P^n(x)$  denote the atom of the partition $P^n=\vee_{j=0}^{n-1}T^{-j}P$  containing $x$.  Then for any $n\in \mathbb{N}$ and $x\in X$ one has $P^n(x)\subset B_n(x,2\epsilon)$. Applying  Shannon-McMillan-Breiman theorem  and  noticing that $ P$  is arbitrary, it yields that
$$\overline{h}_\mu^{BK}(T,2\epsilon)\leq \inf_{\diam(P) \leq \epsilon}\limits h_\mu(T,P).$$ 

Given a finite open cover $\UU $ of $X$ with diameter at most $\epsilon$ and $\gamma >0$,  by \cite[Theorem 4.4]{s07} $h_\mu^S(\mathcal{U})=\inf_{P\succ \mathcal{U}}h_\mu(T,P)$,  there exists a finite  Borel  measurable partition  $P\succ\mathcal{U}$ such that  $\inf_{\diam(Q) \leq \epsilon}\limits h_\mu(T,Q)\leq h_\mu(T,P)<h_\mu^S(\mathcal{U})+\gamma$, which implies that  $$
\inf_{\diam(P) \leq \epsilon}\limits h_\mu(T,P) \leq \inf_{\diam(\UU)\leq \epsilon}\limits h_\mu^S(\mathcal{U}).$$

Choose a finite open cover  $\mathcal{ U}$ of $X$  with $\diam(\mathcal{U})\leq \epsilon$ and $\Leb(\mathcal{U})\geq \frac{\epsilon}{4}$(such an open cover exists by considering a finite $\frac{\epsilon}{4}$-net of $X$, cf. \cite[Lemma 3.4]{gs20}). Since every Bowen ball $B_n(x,\frac{\epsilon}{4})$ is contained in some member of $\mathcal{U}^n$, then
$ N_\mu (\mathcal{ U}^n,1-\delta) \leq  R_\mu ^{\delta}(T,n,\frac{\epsilon}{4})$.  Therefore,  by  \cite[Theorem 4.2]{s07}, one has 
\begin{align*}
\inf_{\diam (\UU)\leq \epsilon}\limits h_\mu^S(\mathcal{U}) \leq h_\mu^S(\mathcal{U})=\lim_{n\to \infty}\frac{\log N_{\mu}(\mathcal{U}^n,	1-\delta)}{n}\leq \underline{h}_\mu^{K}(T,\frac{\epsilon}{4},\delta).
\end{align*}

Again, let $\mathcal{V}$ be a finite open cover of $X$  with $\diam(\mathcal{ V})\leq \frac{\epsilon}{8}(<\frac{\epsilon}{4})$ and $\Leb(\mathcal{V})\geq \frac{\epsilon}{32}$. 
Then  each member  of $\mathcal{V}^n$  is contained in  $B_n(x,\frac{\epsilon}{4})$ for some $x \in X$, and  each  Bowen ball $B_n(x,\frac{\epsilon}{32})$ is contained in some members of $\mathcal{V}^n$. This gives us  $ R_\mu ^{\delta}(T,n,\frac{\epsilon}{4})\leq N_\mu (\mathcal{ V}^n,1-\delta) \leq  R_\mu ^{\delta}(T,n,\frac{\epsilon}{32})$.  Then
\begin{align*}
\overline{h}_\mu^{K}(T,\frac{\epsilon}{4},\delta)
\leq h_\mu^S(\mathcal{V})&=\lim_{n\to \infty}\frac{\log N_{\mu}(\mathcal{V}^n,1-\delta)}{n},~\text{by  \cite[Theorem 4.2]{s07}} \\
&= \liminf_{n\to \infty}\frac{\log N_{\mu}(\mathcal{V}^n,1-\delta)}{n}, ~\text{by  \cite[Theorem 4.2]{s07}}\text{~again} \\
&\leq  \underline{h}_\mu^{K}(T,\frac{\epsilon}{32},\delta)\\
&\leq  \underline{h}_\mu^{K}(T,\frac{\epsilon}{32}).
\end{align*}
The last inequality $\overline{h}_\mu^{K}(T,\frac{\epsilon}{32})\leq \overline{h}_\mu^{BK}(T,\frac{\epsilon}{64})$ follows from \cite[Proposition 3.21]{ycz22}.
\end{proof}

\begin{lem}\label{lem 3.2}
Let $(X,T)$  be a TDS  with a metric $d$ and  $\mu \in E(X,T)$.  Then for any $\epsilon >0$, 
\begin{align*}
\overline{h}_\mu^{BK}(T,\epsilon)&\leq \PP_{\mu}(T,d,\frac{\epsilon}{10})\leq \inf\{h_{top}^P(T,Z,d,\frac{\epsilon}{10}): \mu(Z)=1\}
\leq 
h_{top}^P(T,G_\mu,d,\frac{\epsilon}{10})\\
&\leq PS_\mu(T,\frac{\epsilon}{10})\leq R_{\mu,L^{\infty}}(T,\frac{\epsilon}{60})
\leq \overline{h}_\mu^K(T,\frac{\epsilon}{60})\leq \overline{h}_\mu^{BK}(T,\frac{\epsilon}{120}).
\end{align*}
\end{lem}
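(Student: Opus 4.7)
The argument splits the chain into three blocks. The leftmost four inequalities $\overline{h}_\mu^{BK}(T,\epsilon)\leq\mathcal P_\mu(T,d,\tfrac{\epsilon}{10})\leq\inf\{h_{top}^P(T,Z,d,\tfrac{\epsilon}{10}):\mu(Z)=1\}\leq h_{top}^P(T,G_\mu,d,\tfrac{\epsilon}{10})$ move from the Brin--Katok formula to packing on the generic-point set. For the leftmost link I fix $s<s'<\overline{h}_\mu^{BK}(T,\epsilon)$, use the pointwise Brin--Katok formula to produce a Borel set $A$ with $\mu(A)>0$ on which $\limsup_n-\tfrac{1}{n}\log\mu(B_n(x,\epsilon))>s'$, and choose $\delta<\mu(A)$. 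In any $\mathcal P^\delta$-competitor $\{Z_i\}$ some $Z_i$ has $\mu(A\cap Z_i)>0$; a Vitali-type $3$-covering in the Bowen metric (valid because of the monotone inequality $d_n\leq d_m$ for $n\leq m$) extracts disjoint closed Bowen balls $\overline B_{n_j}(x_j,\tfrac{\epsilon}{10})$, $x_j\in A\cap Z_i$, $n_j\geq N$, whose $\tfrac{3\epsilon}{10}$-enlargements cover $A\cap Z_i$; since $\tfrac{3\epsilon}{10}<\epsilon$ the Brin--Katok bound gives $\sum_j e^{-n_j s'}\geq\mu(A\cap Z_i)$, whence $\sum_j e^{-n_j s}\geq e^{N(s'-s)}\mu(A\cap Z_i)\to\infty$. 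Thus $P(T,d,Z_i,s,\tfrac{\epsilon}{10})=+\infty$ and hence $s\leq\mathcal P_\mu(T,d,\tfrac{\epsilon}{10})$. The remaining two links are immediate: $\{Z\}$ is an admissible cover in every $\mathcal P^\delta$-infimum when $\mu(Z)=1$, and ergodicity plus Birkhoff gives $\mu(G_\mu)=1$.

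For $h_{top}^P(T,G_\mu,d,\tfrac{\epsilon}{10})\leq PS_\mu(T,\tfrac{\epsilon}{10})$ I fix a weak$^*$-neighborhood $F$ of $\mu$ and stratify $G_\mu=\bigcup_{N\geq 1}G_\mu(F,N)$ by waiting time $G_\mu(F,N):=\{x\in G_\mu:\tfrac{1}{n}\sum_{j=0}^{n-1}\delta_{T^j x}\in F\text{ for all }n\geq N\}$. Given $s<h_{top}^P(T,G_\mu,d,\tfrac{\epsilon}{10})$, the critical-value definition forces $P(T,d,G_\mu(F,N_0),s,\tfrac{\epsilon}{10})=+\infty$ for some $N_0$, so for each $N\geq N_0$ I extract pairwise disjoint closed Bowen balls $\overline B_{n_i}(x_i,\tfrac{\epsilon}{10})$ with $x_i\in G_\mu(F,N_0)\subset X_{n_i,F}$, $n_i\geq N$, and $\sum_i e^{-n_i s}$ arbitrarily large. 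Pigeonholing on the common length: if no $n\geq N$ admitted $\#\{i:n_i=n\}\geq e^{n s'}$ for a fixed $s'<s$, the total would be dominated by the convergent series $\sum_{n\geq N}e^{-n(s-s')}$, a contradiction; for such $n$ the disjointness of the closed balls makes the corresponding centers an $(n,\tfrac{\epsilon}{10})$-separated subset of $X_{n,F}$, so $\limsup_n\tfrac{1}{n}\log s_n(T,d,\tfrac{\epsilon}{10},X_{n,F})\geq s'$. Sending $s'\to s$ and infimizing over $F$ concludes.

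The rightmost block $PS_\mu(T,\tfrac{\epsilon}{10})\leq R_{\mu,L^\infty}(T,\tfrac{\epsilon}{60})\leq\overline{h}_\mu^K(T,\tfrac{\epsilon}{60})\leq\overline{h}_\mu^{BK}(T,\tfrac{\epsilon}{120})$ is a sequence of information-theoretic comparisons essentially in the literature. For the two middle links I invoke the quantization description of $L^\infty$-rate-distortion from \cite{lt18}: a near-optimal $L^\infty$-encoder at distortion $\tfrac{\epsilon}{60}$ furnishes finite codebooks that are $(n,\tfrac{\epsilon}{60})$-spanning sets of near-full $\mu$-measure (yielding $R_{\mu,L^\infty}\leq\overline{h}_\mu^K$) and whose time-averaged output measures weak$^*$-concentrate near $\mu$ (yielding $PS_\mu\leq R_{\mu,L^\infty}$ after a standard spanning-to-separated conversion). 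The last link is Proposition 3.21 of \cite{ycz22}. The main technical obstacle I expect is the Pfister--Sullivan step: transferring a packing family supported on the non-compact Borel set $G_\mu$ into a single-order $(n,\tfrac{\epsilon}{10})$-separated subset of $X_{n,F}$ requires a coordinated double pigeonhole over the waiting-time stratification and the order $n_i$, while preserving the factor $\tfrac{1}{10}$ through the closed-ball-to-separated-set step.
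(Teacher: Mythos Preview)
Your proof is correct and follows the same three-block structure as the paper: the stratify-and-pigeonhole argument for $h_{top}^P(T,G_\mu,d,\tfrac{\epsilon}{10})\le PS_\mu(T,\tfrac{\epsilon}{10})$ matches the paper's Proposition~3.3, and the rightmost block is handled in the paper by the same citations (\cite[Proposition~4.3]{w21} and \cite[Proposition~3.21]{ycz22}). The one technical variation is in the first link $\overline h_\mu^{BK}(T,\epsilon)\le\mathcal P_\mu(T,d,\tfrac{\epsilon}{10})$: you run a Vitali-type $3r$-covering across Bowen balls of \emph{varying} orders (using $d_n\le d_m$ to absorb higher-order balls into the $3r$-enlargement of a lower-order one), whereas the paper first pigeonholes to a \emph{single} order $n$ via $\mu(E_n)\ge\tfrac{1}{n(n+1)}\mu(A\cap Z_i)$ and then applies the standard $5r$-lemma in the fixed metric $d_n$---both routes are sound, and the paper's avoids having to justify the covering lemma across different Bowen metrics.
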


Using  the facts $PS_\mu(T,\epsilon)\leq R_{\mu,L^{\infty}}(T,\frac{\epsilon}{6})\leq \overline{h}_\mu^K(T,\frac{\epsilon}{6})$  \cite[Proposition 4.3]{w21}  and  
$ \overline{h}_\mu^K(T,2\epsilon)\leq \overline{h}_\mu^{BK}(T,\epsilon)$ \cite[Proposition 3.21]{ycz22},  and  noting that $\epsilon>0$  is arbitrary, this shows 
$$PS_\mu(T,\frac{\epsilon}{10})\leq R_{\mu,L^{\infty}}(T,\frac{\epsilon}{60})
\leq \overline{h}_\mu^K(T,\frac{\epsilon}{60})\leq \overline{h}_\mu^{BK}(T,\frac{\epsilon}{120}).$$
The following two  propositions are  to show the remaining inequalities given in Lemma \ref{lem 3.2}: 
\begin{align*}
\overline{h}_\mu^{BK}(T,\epsilon)\leq \PP_{\mu}(T,d,\frac{\epsilon}{10})\leq
\inf\{h_{top}^P(T,Z,d,\frac{\epsilon}{10}): \mu(Z)=1\}\\
\leq h_{top}^P(T,G_\mu,d,\frac{\epsilon}{10})\leq  PS_\mu(T,\frac{\epsilon}{10}).
\end{align*}

\begin{prop}
Let $(X,T)$  be a TDS  with a metric $d$ and  $\mu \in E(X,T)$.  Then for any $\epsilon >0$, 
$$h_{top}^P(T,G_\mu,d,\epsilon) \leq PS_\mu(T,\epsilon).$$
\end{prop}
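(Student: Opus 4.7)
The plan is to show that for any $s > PS_\mu(T,\epsilon)$, we have $\mathcal{P}(T,d,G_\mu,s,\epsilon)=0$, which forces $h_{top}^P(T,G_\mu,d,\epsilon)\leq s$ and, after letting $s$ decrease to $PS_\mu(T,\epsilon)$, gives the result. Fix such an $s$. By definition of $PS_\mu$, there exists a neighborhood $F$ of $\mu$ in $M(X)$ and a constant $\tau \in (PS_\mu(T,\epsilon),s)$ such that $s_n(T,d,\epsilon,X_{n,F})\leq e^{n\tau}$ for all sufficiently large $n$, say $n\geq N_0$.

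Next I would slice $G_\mu$ using the generic-point property. For each integer $N\geq N_0$ set
\begin{equation*}
Z_N:=\bigl\{x\in G_\mu: x\in X_{n,F}\text{ for all }n\geq N\bigr\}.
\end{equation*}
Since every $x\in G_\mu$ has $\frac{1}{n}\sum_{j=0}^{n-1}\delta_{T^j(x)}\to\mu$ and $F$ is an open neighborhood of $\mu$, eventually $x\in X_{n,F}$, so $G_\mu=\bigcup_{N\geq N_0}Z_N$. The subadditivity built into $\mathcal{P}(T,d,\cdot,s,\epsilon)$ via the covering infimum then reduces the problem to showing $P(T,d,Z_N,s,\epsilon)=0$ for every $N$.

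The key observation, and really the heart of the argument, is a separation lemma for the pre-packing: if $\{\overline{B}_{n_i}(x_i,\epsilon)\}_{i\in I}$ is a pairwise disjoint family with $x_i\in Z_N$ and $n_i\geq M\geq N$, then for each fixed order $n$ the indices $I_n:=\{i\in I: n_i=n\}$ produce centers $\{x_i\}_{i\in I_n}$ that are $(n,\epsilon)$-separated (two centers in a common order $n$ whose closed Bowen balls are disjoint must satisfy $d_n(x_i,x_j)>\epsilon$). Moreover, each such $x_i$ lies in $X_{n,F}$ by definition of $Z_N$. Hence $\#I_n\leq s_n(T,d,\epsilon,X_{n,F})\leq e^{n\tau}$ for $n\geq M$, and summing over orders gives
\begin{equation*}
\sum_{i\in I}e^{-n_i s}=\sum_{n\geq M}\#I_n\cdot e^{-ns}\leq\sum_{n\geq M}e^{-n(s-\tau)}.
\end{equation*}
This last sum tends to $0$ as $M\to\infty$ because $s>\tau$. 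Taking suprema over admissible families and then the limit in $M$ yields $P(T,d,Z_N,s,\epsilon)=0$, so $\mathcal{P}(T,d,G_\mu,s,\epsilon)\leq\sum_{N\geq N_0}P(T,d,Z_N,s,\epsilon)=0$, as wanted.

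I expect the only delicate point to be bookkeeping with the two parameters $N$ (threshold in the definition of $Z_N$) and $M$ (the cutoff in $P(T,d,Z_N,s,M,\epsilon)$); one must take $M\geq N$ so that the membership $x_i\in X_{n_i,F}$ is guaranteed for every admissible $n_i\geq M$. Apart from this, the estimate is routine once the separation-from-disjointness observation is in hand; no Carath\'eodory fine-tuning beyond the definition is needed.
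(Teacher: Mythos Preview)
Your argument is correct and is essentially the contrapositive of the paper's proof: both use the same decomposition $G_\mu=\bigcup_{N}Z_N$ (the paper writes $G_{\mu,F}^{N}$) and the same key observation that centers of equal order in a disjoint Bowen-ball packing of $Z_N$ form an $(n,\epsilon)$-separated subset of $X_{n,F}$. The paper starts from $s<h_{top}^P(T,G_{\mu,F}^{N_0},d,\epsilon)$ and extracts large separated sets in $X_{n,F}$, whereas you start from $s>PS_\mu(T,\epsilon)$ and bound the pre-packing sums directly; the bookkeeping remark about taking $M\geq N$ is exactly the point the paper handles by requiring $N\geq N_0$.
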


\begin{proof}
Fix  a neighborhood $F$ of $\mu$. Let $$G_{\mu,F}^N:=\{x\in G_\mu: \frac{1}{n}\sum_{j=1}^{n-1} \delta_{T^{j}(x)}\in F, \forall n\geq N\}$$ and fix $N_0\in\N$. Then $G_\mu =\cup_{N\geq 1}G_{\mu,F}^N$ and  $G_{\mu,F}^{N_0} \subset X_{n,F}$ for any $n\geq N_0$. Without loss  of generality, we assume that  $h_{top}^P(T,G_{\mu,F}^{N_0},d,\epsilon)>0$. Let $0<s<t< h_{top}^P(T,G_{\mu,F}^{N_0},d,\epsilon)$. Then  
$P(T,d,G_{\mu,F}^{N_0},t,\epsilon)\geq \mathcal P(T,d,G_{\mu,F}^{N_0},t,\epsilon) =\infty$. Hence,
for any $N\geq N_0$ we  can choose a  finite or countable  pairwise disjoint  closed  ball family $\{\overline B_{n_i}(x_i,\epsilon)\}_{i\in I}$  with $n_i \geq N, x_i\in G_{\mu,F}^{N_0}$
satisfying
$\sum_{i\in I}e^{-n_it}>1.$

Put $I_k:=\{i\in I: n_i=k\}$, $k\geq N$. Then  
$\sum_{k\geq N}\sum_{i\in I_k}e^{-kt}>1$ and the set  $\{x_i:i\in I_{k}\}$ is a $({k},\epsilon)$-separated set of $X_{{k},F}$. There exists $k\geq N$ (depending on $N$) such that $\#I_{k}>(1-e^{s-t})e^{sk}$. This implies that $s_{{k}}(T,d,\epsilon,X_{{k},F})\geq (1-e^{s-t})e^{sk}$, which yields that 
$\limsup_{n \to \infty}\limits \frac{\log s_n(T, d, \epsilon, X_{n,F})}{n}\geq s.$
Letting $s\to  h_{top}^P(T,G_{\mu,F}^{N_0},d,\epsilon)$, we get 
$$\limsup_{n \to \infty}\limits \frac{\log s_n(T,d,\epsilon,X_{n,F})}{n}\geq h_{top}^P(T,G_{\mu,F}^{N_0},d,\epsilon).$$

Since  $G_\mu =\cup_{N_0\geq 1}G_{\mu,F}^{N_0}$,   by the definition of $\mathcal P(T,d,G_{\mu},\lambda,\epsilon)$, one has
$\mathcal{P}(T,d,G_\mu,\lambda,\epsilon)\leq \sum_{N_0\geq 1}\mathcal{P}(T,d,G_{\mu,F}^{N_0},\lambda,\epsilon)$ for any $\lambda\geq0$. This implies that for $\epsilon>0$, $$h_{top}^P(T,G_{\mu},d,\epsilon)\leq  \sup_{N_0\geq 1}h_{top}^P(T,G_{\mu,F}^{N_0},d,\epsilon)\leq h_{top}^P(T,G_{\mu},d,\epsilon).$$
Using this fact  and  noting that $F$ is arbitrary, one has
$ h_{top}^P(T,G_{\mu},d,\epsilon) \leq PS_\mu(T,\epsilon).$
\end{proof}

\begin{prop}\label{prop 3.4}
Let $(X,T)$ be a TDS  with a metric $d$  and $\mu \in M(X)$. Then for every $\epsilon >0$,
\begin{align*}
\underline{h}_\mu^{BK}(T,\epsilon)\leq M_{\mu}(T,d,\frac{\epsilon}{2})&\leq \inf\{h_{top}^B(T,Z,d,\frac{\epsilon}{2}): \mu(Z)=1\},\\
\overline{h}_\mu^{BK}(T,\epsilon)\leq \PP_{\mu}(T,d,\frac{\epsilon}{10})&\leq \inf\{h_{top}^P(T,Z,d,\frac{\epsilon}{10}): \mu(Z)=1\}.
\end{align*}
\end{prop}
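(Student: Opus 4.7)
The plan is to handle the four sub-inequalities in two groups. The right-hand inequalities on both lines are essentially formal: if $\mu(Z)=1$ and $\{Z_{i}\}$ is any countable cover (resp.\ partition) of $Z$, then $\mu(\bigcup_{i}Z_{i})\ge\mu(Z)=1>1-\delta$, so $\{Z_{i}\}$ is admissible in the infimum defining $M^{\delta}(T,d,\mu,s,\epsilon/2)$ (resp.\ $\mathcal{P}^{\delta}(T,d,\mu,s,\epsilon/10)$). Thus $M^{\delta}(T,d,\mu,s,\epsilon/2)\le M(T,d,Z,s,\epsilon/2)$ and $\mathcal{P}^{\delta}(T,d,\mu,s,\epsilon/10)\le \mathcal{P}(T,d,Z,s,\epsilon/10)$, and passing to critical values and letting $\delta\to0$ gives both right-hand inequalities.

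For $\underline{h}_{\mu}^{BK}(T,\epsilon)\le M_{\mu}(T,d,\epsilon/2)$, I would fix $s<\underline{h}_{\mu}^{BK}(T,\epsilon)$ and pick $\eta>0$ with $s+2\eta<\int\underline{\phi}\,d\mu$, where $\underline{\phi}(x)=\liminf_{n\to\infty}-\frac{1}{n}\log\mu(B_{n}(x,\epsilon))$. The set $\{\underline{\phi}>s+2\eta\}$ has positive measure $\mu_{0}$, and since it is contained in the increasing union of $E_{N}:=\{x:\mu(B_{n}(x,\epsilon))\le e^{-n(s+\eta)}\text{ for all }n\ge N\}$, one can choose $N_{0}$ with $\mu(E_{N_{0}})\ge\mu_{0}/2$. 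Given any cover $\{B_{n_{i}}(x_{i},\epsilon/2)\}_{i\in I}$ with $n_{i}\ge N\ge N_{0}$ and $\mu(\bigcup_{i}B_{n_{i}}(x_{i},\epsilon/2))>1-\delta$, I would restrict to indices $i$ whose ball meets $E_{N_{0}}$ and pick $y_{i}\in B_{n_{i}}(x_{i},\epsilon/2)\cap E_{N_{0}}$; the inclusion $B_{n_{i}}(x_{i},\epsilon/2)\subseteq B_{n_{i}}(y_{i},\epsilon)$ (triangle inequality in the Bowen metric) bounds $\mu(B_{n_{i}}(x_{i},\epsilon/2))\le e^{-n_{i}(s+\eta)}$, so subadditivity yields
\[
\sum_{i}e^{-n_{i}s}\ge e^{N\eta}\,\mu\Bigl(E_{N_{0}}\cap\bigcup_{i}B_{n_{i}}(x_{i},\epsilon/2)\Bigr)\ge e^{N\eta}\bigl(\mu(E_{N_{0}})-\delta\bigr),
\]
which blows up as $N\to\infty$ whenever $\delta<\mu(E_{N_{0}})$. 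Hence $M^{\delta}(T,d,\mu,s,\epsilon/2)=\infty$ for all sufficiently small $\delta$, so $M_{\mu}(T,d,\epsilon/2)\ge s$; letting $s\nearrow\underline{h}_{\mu}^{BK}(T,\epsilon)$ closes the first line.

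For $\overline{h}_{\mu}^{BK}(T,\epsilon)\le\mathcal{P}_{\mu}(T,d,\epsilon/10)$, the structure is analogous but the one-step triangle argument is replaced by a Feng-Huang $5r$-covering argument for dynamical balls. With $\overline{\phi}(x)=\limsup_{n\to\infty}-\frac{1}{n}\log\mu(B_{n}(x,\epsilon))$, choose $s,\eta>0$ with $s+\eta<\int\overline{\phi}\,d\mu$ and set $A=\{\overline{\phi}>s+\eta\}$, $\mu_{0}=\mu(A)>0$. The central claim I would establish is that $P(T,d,Z,s,\epsilon/10)=\infty$ whenever $\mu(Z\cap A)>0$: given $N$, for each $x\in Z\cap A$ the $\limsup$ condition yields some $n(x)\ge N$ with $\mu(B_{n(x)}(x,\epsilon))<e^{-n(x)(s+\eta/2)}$; the dynamical $5r$-covering lemma from \cite{fh12} applied to $\{\bar{B}_{n(x)}(x,\epsilon/10)\}_{x\in Z\cap A}$ extracts a pairwise disjoint subfamily $\{\bar{B}_{n(x_{j})}(x_{j},\epsilon/10)\}_{j\in J}$ whose enlargements $\bar{B}_{n(x_{j})}(x_{j},\epsilon/2)\subseteq B_{n(x_{j})}(x_{j},\epsilon)$ still cover $Z\cap A$, whence
\[
\mu(Z\cap A)\le\sum_{j}\mu(B_{n(x_{j})}(x_{j},\epsilon))\le\sum_{j}e^{-n(x_{j})(s+\eta/2)}\le e^{-N\eta/2}\sum_{j}e^{-n(x_{j})s}.
\]
Thus $\sum_{j}e^{-n(x_{j})s}\ge e^{N\eta/2}\mu(Z\cap A)\to\infty$ as $N\to\infty$, proving the claim. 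For $\delta<\mu_{0}$ and any $\{Z_{i}\}$ with $\mu(\bigcup_{i}Z_{i})>1-\delta$, some $Z_{i_{0}}$ satisfies $\mu(Z_{i_{0}}\cap A)>0$ (since $\sum_{i}\mu(Z_{i}\cap A)\ge\mu_{0}-\delta>0$), which forces $\sum_{i}P(T,d,Z_{i},s,\epsilon/10)=\infty$, hence $\mathcal{P}^{\delta}(T,d,\mu,\epsilon/10)\ge s$ and therefore $\mathcal{P}_{\mu}(T,d,\epsilon/10)\ge s$. The main obstacle I expect is the $5r$-covering step with unbounded orders $n(x)$; a careful invocation of the dynamical Vitali lemma, possibly after truncating $n(x)\le M$ and then letting $M\to\infty$, or after grouping indices by the magnitude of $n(x)$, is where most of the technical care is required.
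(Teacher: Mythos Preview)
Your proposal is correct and follows essentially the same architecture as the paper's proof: the two right-hand inequalities are formal (the paper dispatches them in one line ``by definitions''), and the two left-hand inequalities are handled by isolating a set $A$ of positive measure on which the local entropy exceeds $s$, then using the triangle inequality (Bowen case) or a $5r$-covering argument (packing case). A few comparative remarks:

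For the Bowen inequality, you keep an extra margin $\eta$ so that $M^{\delta}(T,d,\mu,s,\epsilon/2)=\infty$; the paper is content to show the weaker $M^{\delta}(T,d,\mu,s,\epsilon/2)\ge\tfrac12\mu(A)>0$, which already forces the critical value to be at least $s$. Both work, yours is slightly sharper but not needed.

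For the packing inequality, the paper resolves exactly the obstacle you flag at the end. Instead of applying a multi-order dynamical Vitali lemma to $\{\overline B_{n(x)}(x,\epsilon/10)\}_{x\in Z\cap A}$ with varying $n(x)$, the paper first pigeonholes: setting $E_{n}=\{x\in A\cap Z_{i}:\mu(B_{n}(x,\epsilon))<e^{-n(s+\theta)}\}$, one has $A\cap Z_{i}=\bigcup_{n\ge N}E_{n}$, and since $\sum_{n\ge 1}\frac{1}{n(n+1)}=1$ there exists $n\ge N$ with $\mu(E_{n})\ge\frac{\mu(A\cap Z_{i})}{n(n+1)}$. For that \emph{single} $n$ the family $\{\overline B_{n}(x,\epsilon/10):x\in E_{n}\}$ lives in the ordinary metric space $(X,d_{n})$, so the classical $5r$-covering lemma applies without any dynamical refinement. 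This is precisely the ``grouping indices by the magnitude of $n(x)$'' strategy you mention as a fallback, carried out in the cleanest possible way; it avoids any appeal to a Vitali lemma with unbounded orders.
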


\begin{proof}
Fix $\epsilon >0$.  We firstly  show  inequality
$$\underline{h}_\mu^{BK}(T,\epsilon)\leq M_{\mu}(T,d,\frac{\epsilon}{2})\leq \inf\{h_{top}^B(T,Z,d,\frac{\epsilon}{2}): \mu(Z)=1\}.$$
Obviously, the inequality  $M_{\mu}(T,d,\frac{\epsilon}{2})\leq \inf\{h_{top}^B(T,Z,d,\frac{\epsilon}{2}): \mu(Z)=1\}$ holds by definitions.  Assume that $\underline{h}_\mu^{BK}(T,\epsilon)>0$ and let $0<s<\underline{h}_\mu^{BK}(T,\epsilon)$. By a standard method, there exist a Borel set $ A$ with $\mu(A)>0$ and $N_0$ such that 
$\mu(B_n(x,\epsilon))<e^{-ns}$
for any $x\in A$ and $n\geq N_0$.
Let $\delta =\frac{1}{2}\mu(A)>0$ and fix an integer  number $N\geq N_0$.  Let  $\{B_{n_i}(x_i,\frac{\epsilon}{2})\}_{i \in I}$  be a finite or countable family so that  $\mu(\cup_{i\in I}B_{n_i}(x_i,\frac{\epsilon}{2}))\geq 1-\delta$ with $x_i\in X$ and $n_i\geq N$.  Then  $\mu(A\cap \cup_{i\in I}B_{n_i}(x_i,\frac{\epsilon}{2}))\geq \frac{1}{2}\mu(A)>0$. 
Put $$I_1=\{i \in I: A\cap B_{n_i}(x_i,\frac{\epsilon}{2})\not=\emptyset\}.$$
For each $i\in I_1$ we choose $y_i\in  A\cap B_{n_i}(x_i,\frac{\epsilon}{2})$ such that $  B_{n_i}(y_i,\epsilon)\supset A\cap B_{n_i}(x_i,\frac{\epsilon}{2})$. Notice  that
\begin{align*}
\sum_{i\in I}e^{-sn_i}\geq \sum_{i\in I_1}e^{-sn_i}\geq \sum_{i\in I_1}\mu(B_{n_i}(y_i,\epsilon))\geq\frac{1}{2}\mu(A).
\end{align*}
Then  $M^\delta (T,d,\mu,s,\frac{\epsilon}{2})\geq M^\delta (T,d,\mu,s,N,\frac{\epsilon}{2})>0$ and hence $M_{\mu}(T,d,\frac{\epsilon}{2})\geq M^\delta (T,d,\mu,\frac{\epsilon}{2})\geq s$. Letting $s \to \underline{h}_\mu^{BK}(T,\epsilon) $ we obtain that
$\underline{h}_\mu^{BK}(T,\epsilon)\leq M_{\mu}(T,d,\frac{\epsilon}{2})$.

Next, we  show
$$\overline{h}_\mu^{BK}(T,\epsilon)\leq \PP_{\mu}(T,d,\frac{\epsilon}{10})\leq \inf\{h_{top}^P(T,Z,d,\frac{\epsilon}{10}): \mu(Z)=1\}.$$ 
Since $\PP_{\mu}(T,d,\frac{\epsilon}{10})\leq \inf\{h_{top}^P(T,Z,d,\frac{\epsilon}{10}): \mu(Z)=1\}$ by definitions, so  it suffices to show the inequality $\overline{h}_\mu^{BK}(T,\epsilon)\leq \PP_{\mu}(T,d,\frac{\epsilon}{10})$. Let $0<s<\overline{h}_\mu ^{BK}(T,\epsilon)$. Then one can choose  $\theta >0$ and a Borel set $A$ with $\mu(A)>0$ so that 

$$\limsup_{n \to \infty }-\frac{\log \mu(B_n(x,\epsilon))}{n}>s+\theta$$
for all $x\in A$. Let  $\delta \in (0,\mu(A))$ and  $\{Z_i\}_{i\geq1}$  be  a collection  of  subsets of $X$ with  $\mu(\cup_{i\in 1}Z_i)\geq 1-\delta$. Then $\mu(A\cap Z_i)>0$ for some $i\geq 1$. Fix such $i$ and put
$$E_n:=\left\{x\in A\cap Z_i: \mu(B_n(x,\epsilon))<e^{-(s+\theta)n}\right\}.$$ Then  $A\cap Z_i=\cup_{n\geq N}E_n$ for every $N\in\mathbb{N}$. Fix  an integer number $N\geq 1$. There exists  $n\geq N$ so that 
$\mu(E_n)\geq \frac{1}{n(n+1)}\mu(A\cap Z_i).$ Fix such  $n$. We consider a closed  cover  $\{\overline{B}_n(x,\frac{\epsilon}{10}):x\in E_n\}$ of $E_n$. By 5r-covering lemma \cite[Theorem 2.1]{m95},  there exists  a finite  pairwise disjoint subfamily 
$\{\overline{B}_n(x_i,\frac{\epsilon}{10}):i\in I\}$, where $I$ is a  finite index set, so  that 
$$\cup_{i\in I}B_n(x_i,\epsilon)\supseteq\cup_{i\in I}\overline{B}_n(x_i,\frac{\epsilon}{2})\supseteq \cup_{x\in E_n}\overline{B}_n(x,\frac{\epsilon}{10}).$$
Hence, 
\begin{align*}
P(T,d,A\cap Z_i,N,s,\frac{\epsilon}{10})&\geq
P(T,d,E_n,N,s,\frac{\epsilon}{10})
\geq \sum_{i\in I} e^{-ns}\\
&\geq e^{n\theta} \sum_{i\in I} \mu(B_n(x_i,\epsilon))\geq e^{n\theta} \mu (E_n)\\
&\geq e^{n\theta}\frac{\mu(A\cap Z_i)}{n(n+1)}.
\end{align*}
It follows that $P(T,d,A\cap Z_i,s,\frac{\epsilon}{10})=\infty $ by  letting $n \to \infty$ and hence  $\sum_{i\geq 1}P(T,d, Z_i,s,\frac{\epsilon}{10})=\infty$. Therefore,  one has $\mathcal P_{\mu}(T,d,\frac{\epsilon}{10})\geq \mathcal P^\delta(T,d,\mu,\frac{\epsilon}{10})$\\$\geq s.$
Letting $s \to \overline{h}_\mu ^{BK}(T,\epsilon)$ gives us  the desired result.
\end{proof}

A natural follow-up question to Theorem \ref{thm 1.1} is \emph{whether it is possible to include the  lower Brin-Katok $\epsilon$-entropy  $\underline{h}_{\mu}^{BK}(T,\epsilon)$  on the list $\mathcal{D}$?}
	
Until  now, we do not have a proper answer for this question except computing  the  precise   ``divergence speed'' of $ \underline{h}_{\mu}^{BK}(T,\epsilon)$ as $\epsilon \to 0$. Before that, we  invoke  a  3r-covering lemma developed in \cite[Lemma 1]{mw08}.
\begin{lem}\label{lem 3.5}
Let $r>0$ and $\mathcal{B}(r)=\{B_n(x,r):x\in X, n\geq 1\}$. For each family $\mathcal{F}\subset \mathcal{B}(r)$, there exists a subfamily $\mathcal{G}\subset \mathcal{F}$ consisting  of disjoint balls  such that
$$\cup_{B\in \mathcal{F}}B\subset \cup_{ B_n(x,r)\in \mathcal{G}}B_n(x,3r).$$
\end{lem}

\begin{prop}\label{prop 3.6}
Let $(X,T)$ be a TDS with a metric $d$. If $\mu \in E(X,T)$, then  for every $\epsilon>0$,  there exists a Borel  set $E_{\epsilon}$ with  $\mu$-full measure so that
\begin{align*}
h_{top}^B(T,E_{\epsilon}\cap G_{\mu},d,3\epsilon)\leq  \underline{h}_{\mu}^{BK}(T,\epsilon).
\end{align*}
\end{prop}

\begin{proof}
Fix  $\epsilon >0$ and  set  $$E_{\epsilon}=\{x\in X:\underline{h}_{\mu}^{BK}(T, x,\epsilon)=\underline{h}_{\mu}^{BK}(T,\epsilon)\},$$
where  $\underline{h}_{\mu}^{BK}(T, x,\epsilon)=\liminf_{n\to \infty}-\frac{\log \mu (B_n(x,\epsilon))}{n}$. Notice that  $\mu$ is $T$-invariant and  $T(B_{n+1}(x,\epsilon))\subset B_n(Tx,\epsilon)$ for each $n \geq 1$. One has 
$$\mu(B_{n+1}(x,\epsilon))= \mu(T(B_{n+1}(x,\epsilon)))\leq  \mu(B_n(Tx,\epsilon)),$$
which implies  that  $\underline{h}_{\mu}^{BK}(T, Tx,\epsilon) \leq \underline{h}_{\mu}^{BK}(T,^{} x,\epsilon)$  for all  $x\in X$. Since $\int  \underline{h}_{\mu}^{BK}(T, x,\epsilon)-\underline{h}_{\mu}^{BK}(T, Tx,\epsilon) d\mu=0$, we have $\underline{h}_{\mu}^{BK}(T, Tx,\epsilon)=\underline{h}_{\mu}^{BK}(T, x,\epsilon)$ for a.e. $x\in X$.  The ergodicity of $\mu$ yields that $\underline{h}_{\mu}^{BK}(T, x,\epsilon)$  is a constant and exactly equals to  $\underline{h}_{\mu}^{BK}(T,\epsilon)$ for $\mu$-a.e. $x\in X$.
Therefore,  one has $\mu(E_{\epsilon}\cap G_{\mu})=1$.

Let $s>\underline{h}_{\mu}^{BK}(T,\epsilon)$.   For  each $x\in E_{\epsilon} \cap G_{\mu}$,  we can choose a  strictly  increasing sequence $n_j(x)$ that converges to $\infty$ so that $\mu(B_{{n_j}(x)}(x,\epsilon))>e^{-n_j(x)s}$. Given  $N\geq 1$,  the set $E_{\epsilon} \cap G_{\mu}$ is contained  in the family $$\mathcal{F}_N:= \{x\in E_{\epsilon} \cap G_{\mu}:\mu(B_{{n_j}(x)}(x,\epsilon))>e^{-n_j(x)s},n_j(x)\geq N\}.$$
By Lemma \ref{lem 3.5},  there  exists  a  subfamily  of pairwise disjoint Bowen balls $\{B_{n_i}(x_i,\epsilon)\}_{i\in I} \subset \mathcal{F}_N$ such that
$$E_{\epsilon} \cap G_{\mu}\subset \cup_{i\in I}B_{{n_i}}(x_i,3\epsilon),$$
where  the cardinality of $I$ is  at most countable  since  $\mu(X)=1$  and    each  Bowen open ball  $B_{{n_i}}(x_i,\epsilon)$  has positive  $\mu$-measure.
It follows that
\begin{align*}
M(T,d,E_{\epsilon} \cap G_{\mu},s,N,3\epsilon)\leq \sum_{i\in I}e^{-n_is}\leq  \sum_{i\in I}\mu(B_{{n_i}}(x_i,\epsilon))\leq 1,
\end{align*}
which implies that  $h_{top}^B(T,E_{\epsilon}\cap G_{\mu},d,3\epsilon)\leq s$. Letting $s \to \underline{h}_{\mu}^{BK}(T,\epsilon)$ gives us 
$h_{top}^B(T,E_{\epsilon}\cap G_{\mu},d,3\epsilon)\leq  \underline{h}_{\mu}^{BK}(T,\epsilon) $.
\end{proof}

This following Theorem gives the precise  divergent rate of lower Brin-Katok $\epsilon$-entropy of  an ergodic measure.

\begin{thm}\label{thm 3.7}
Let $(X,T)$ be a TDS  with a metric $d$ and $\mu \in E(X,T)$. Then  there exists a  Borel subset $Z\subset X$ with  $\mu$-full measure  such that

\begin{enumerate}[label=\upshape(\roman*), leftmargin=*, widest=iii]
	\item    $\liminf_{\epsilon \to 0}\frac{\underline{h}_{\mu}^{BK}(T,\epsilon)}{\logf}= \liminf_{\epsilon \to 0}\frac{M_{\mu}(T,d,\epsilon)}{\logf}=\underline{\rm {mdim}}_M^B(T,G_\mu\cap Z,d)$;
	\item  $\limsup_{\epsilon \to 0}\frac{\underline{h}_{\mu}^{BK}(T,\epsilon)}{\logf}= \limsup_{\epsilon \to 0}\frac{M_{\mu}(T,d,\epsilon)}{\logf}=\limsup_{k \to \infty} \frac{h_{top}^B(T, G_\mu\cap Z, d, \epsilon_k)}{\log \frac{1}{\epsilon_k}}$ for some subsequence ${\epsilon_k}$ of positive real numbers that converges to $0$ as $k \to \infty$.
\end{enumerate}
\end{thm}
\begin{proof}
 Take  the set $E_\epsilon$ as in Proposition \ref{prop 3.6}.  Then by  Propositions \ref{prop 3.4} and \ref{prop 3.6},  for every $\epsilon >0$, one has
$$\underline{h}_\mu^{BK}(T,6\epsilon)\leq M_{\mu}(T,d,3\epsilon)\leq h_{top}^B(T,E_{\epsilon}\cap G_{\mu},d,3\epsilon)\leq  \underline{h}_{\mu}^{BK}(T,\epsilon).$$
This shows $\liminf_{\epsilon \to 0}\frac{\underline{h}_{\mu}^{BK}(T,\epsilon)}{\logf}= \liminf_{\epsilon \to 0}\frac{M_{\mu}(T,d,\epsilon)}{\logf}$  and
$\limsup_{\epsilon \to 0}\frac{\underline{h}_{\mu}^{BK}(T,\epsilon)}{\logf}= \limsup_{\epsilon \to 0}\frac{M_{\mu}(T,d,\epsilon)}{\logf}.$

Choose two sub-sequences $\{\epsilon_j\}_{j\geq 1}$, $\{\epsilon_k\}_{k\geq 1}$ of positive real numbers that converge to $0$ as $j,k \to \infty$  so that $\lim_{j \to \infty}\frac{\underline{h}_{\mu}^{BK}(T,\epsilon_j)}{\log\frac{1}{\epsilon_j}}=\liminf_{\epsilon \to 0}\frac{\underline{h}_{\mu}^{BK}(T,\epsilon)}{\logf}$  and  $\lim_{k \to \infty}\frac{\underline{h}_{\mu}^{BK}(T,2\epsilon_k)}{\log\frac{1}{2\epsilon_k}}=\limsup_{\epsilon \to 0}\frac{\underline{h}_{\mu}^{BK}(T,\epsilon)}{\logf}$. Let $$Z=(\cap_{j\geq 1}E_{\epsilon_j}) \cap (\cap_{k\geq 1}E_{\epsilon_k}),$$
which has  $\mu$-full measure.   Using Proposition \ref{prop 3.4} again,  for every $\epsilon >0$ we have
$$\underline{h}_\mu^{BK}(T,\epsilon)\leq  h_{top}^B(T,Z\cap G_\mu,d,\frac{\epsilon}{2}).$$
On the other hand,  by Proposition \ref{prop 3.6} one has
$$h_{top}^B(T,Z\cap G_{\mu},d,3\epsilon_j)\leq h_{top}^B(T,E_{\epsilon_j}\cap G_{\mu},d,3\epsilon_j)\leq  \underline{h}_{\mu}^{BK}(T,\epsilon_j)$$
for every $j$. This shows  $\underline{\rm {mdim}}_M^B(T,Z\cap G_\mu,d)= \liminf_{\epsilon \to 0}\frac{\underline{h}_{\mu}^{BK}(T,\epsilon)}{\logf}$.  
The similar arguments give us 
\begin{align*}
\limsup_{\epsilon \to 0}\frac{\underline{h}_{\mu}^{BK}(T,\epsilon)}{\logf}&=\limsup_{k \to \infty}\frac{\underline{h}_{\mu}^{BK}(T,2\epsilon_k)}{\log\frac{1}{2\epsilon_k}}\leq 
\limsup_{k \to \infty}{ \frac{h_{top}^B(T,Z\cap G_{\mu},d,\epsilon_k)}{\log \frac{1}{\epsilon_k}}}\\
&\leq \limsup_{k \to \infty}{ \frac{h_{top}^B(T,E_{\epsilon_k}\cap G_{\mu},d,\epsilon_k)}{\log \frac{1}{\epsilon_k}}}\\
&\leq   \limsup_{k \to \infty}\frac{\underline{h}_{\mu}^{BK}(T,\frac{\epsilon_k}{3})}{\log\frac{1}{\epsilon_k}}
\leq \limsup_{\epsilon \to 0}\frac{\underline{h}_{\mu}^{BK}(T,\epsilon)}{\logf}.
\end{align*}
\end{proof}

Now, we are in a position  to give the proofs of Theorems  \ref{thm 1.2} and \ref{thm 1.3}.

\begin{proof}[Proof of Theorems  \ref{thm 1.2} and  \ref{thm 1.3}]

In \cite[Theorem 3.1 and Remark 3.6]{gs20},
Gutman and  \'Spiewak showed  that 
\begin{align*}
\over&=\limsup_{\epsilon \to 0}\frac{1}{\logf}\sup_{\mu \in E(X,T)}\inf_{\diam(P) \leq \epsilon}\limits h_\mu(T,P)\\
&=\limsup_{\epsilon \to 0}\frac{1}{\logf}\sup_{\mu \in M(X,T)}\inf_{\diam(P) \leq \epsilon}\limits h_\mu(T,P).
\end{align*}
By virtue of  Lemmas \ref{lem 3.1} and \ref{lem 3.2},  one has  the  ergodic variational principles for metric mean dimension:
$$\over=\limsup_{\epsilon \to 0}\frac{1}{\logf}\sup_{\mu \in E(X,T)}F(\mu,\epsilon)$$
for every $F(\mu,\epsilon)\in \mathcal{D}$. In particular, if $F(\mu,\epsilon)=R_{\mu,L^{\infty}}(\epsilon)$,
then
\begin{align*}
\over&=\limsup_{\epsilon \to 0}\frac{1}{\logf}\sup_{\mu \in E(X,T)}R_{\mu,L^{\infty}}(\epsilon)\\
&\leq\limsup_{\epsilon \to 0}\frac{1}{\logf}\sup_{\mu \in M(X,T)}R_{\mu,L^{\infty}}(\epsilon)\\
&\leq\over,
\end{align*}
where the last inequality  follows by  comparing the definitions of metric mean dimension and rate-distortion function (cf.  \cite[Lemma 34]{lt18}). This shows the  Theorem \ref{thm 1.2}.

By using geometric measure theory and  dimensional approach,  the authors \cite[Theorem 1.4]{w21}, \cite[Theorem 1.4]{ycz22} established variational principles  for Bowen  and packing metric mean dimensions of compact subsets of $X$.  Precisely, 
\begin{align*}
\overline{\rm {mdim}}_M^B(T,K,d)&=\limsup_{\epsilon \to 0}\frac{1}{\logf}\sup_{\mu \in M_K(X)}\underline{h}_\mu^{BK}(T,\epsilon),\\
\overline{\rm {mdim}}_M^P(T,K,d)&=\limsup_{\epsilon \to 0}\frac{1}{\logf}\sup_{\mu \in M_K(X)}\overline{h}_\mu^{BK}(T,\epsilon),
\end{align*}
where  $M_K(X):=\{\mu \in M(X),\mu(K)=1\}$. Then the Proposition \ref{prop 3.4} gives us another variational principles on compact subsets: 
\begin{align*}
\overline{\rm {mdim}}_M^B(T,K,d)&=\limsup_{\epsilon \to 0}\frac{1}{\logf}\sup_{\mu \in M_K(X)}M_{\mu}(T, d, \epsilon),\\
\overline{\rm {mdim}}_M^P(T,K,d)&=\limsup_{\epsilon \to 0}\frac{1}{\logf}\sup_{\mu \in M_K(X)}\mathcal{P}_{\mu}(T,d,\epsilon).
\end{align*}
Together with the fact $$\over=\overline{\rm {mdim}}_M^B(T,X,d)=\overline{\rm {mdim}}_M^P(T,X,d)$$ given in \cite[Proposition 3.4, (c)]{ycz22}, we have  the following  variational principles 
\begin{align*}
\over&=\limsup_{\epsilon \to 0}\frac{1}{\logf}\sup_{\mu \in M(X)}F(\mu,\epsilon)
\end{align*}
for every  $F(\mu,\epsilon)\in\{\underline{h}_\mu^{BK}(T,\epsilon), \overline{h}_\mu^{BK}(T,\epsilon),\mathcal{P}_{\mu}(T,d,\epsilon)\}$. 

To get   the remaining equalities of Theorem \ref{thm 1.3}, according to the ergodic variational principles for $F(\mu,\epsilon)$ we only need to show 
$$\limsup_{\epsilon \to 0}\frac{1}{\logf}\sup_{\mu \in M(X)}F(\mu,\epsilon)\leq\over ,$$
for every $$F(\mu,\epsilon)\in\left\{ \underline{h}_\mu^K(T,\epsilon,\delta), \overline{h}_\mu^K(T,\epsilon,\delta),\underline{h}_\mu^K(T,\epsilon),\atop \overline{h}_\mu^K(T,\epsilon),PS_\mu(T,d,\epsilon) 
,\inf_{\diam (\UU)\leq \epsilon} \limits h_\mu^S(\mathcal{U,\delta})\right\}.$$
Given $\mu \in M(X)$ and $\epsilon >0$, if  $$F(\mu,\epsilon)\in\{ \underline{h}_\mu^K(T,\epsilon,\delta), \overline{h}_\mu^K(T,\epsilon,\delta),\underline{h}_\mu^K(T,\epsilon),  \overline{h}_\mu^K(T,\epsilon),PS_\mu(T,d,\epsilon) 
\},$$
then  $F(\mu,\epsilon)\leq h_{top}(T,X,d,\epsilon)$ by definitions; if $F(\mu,\epsilon)=\inf_{\diam (\UU)\leq \epsilon} \limits h_\mu^S(\mathcal{U},\delta)$, analogous to the proof of the Lemma \ref{lem 3.1} one can show $F(\mu,\epsilon)\leq \underline{h}_\mu^K(T,\frac{\epsilon}{4},\delta)\leq h_{top}(T,X,d,\frac{\epsilon}{4})$
by considering a finite open cover  $\mathcal{ U}$ of $X$ with $\diam(\mathcal{U})\leq \epsilon$ and $\Leb(\mathcal{U})\geq \frac{\epsilon}{4}$. This completes the proof.
\end{proof}

\section{Applications of  main results}
~~~~In this section, we provide some applications of main results.

Katok  entropy formula states that for every $\delta \in(0,1)$,
$\lim_{\epsilon \to 0}\overline{h}_{\mu}^K(T,\epsilon, \delta)=\lim_{\epsilon \to 0}\underline{h}_{\mu}^K(T,\epsilon, \delta)=h_{\mu}(T)$. In infinite entropy theory,  by Theorem \ref{thm 1.1} such an analogue is the following:
\begin{cor}
Let $(X,T)$ be a TDS with a metric $d$. Suppose that  $\mu \in E(X,T)$. Then for every  $\delta \in (0,1)$ and $F(\mu,\epsilon)\in\{\underline{h}_\mu^K(T,\epsilon,\delta),\overline{h}_\mu^K(T,\epsilon,\delta)\}$, 
\begin{align*}
\limsup_{\epsilon \to 0}\frac{F(\mu,\epsilon)}{\logf}&=\limsup_{\epsilon \to 0}\frac{\inf_{\diam(P) \leq \epsilon} h_\mu(T,P)}{\logf},\\
\liminf_{\epsilon \to 0}\frac{F(\mu,\epsilon)}{\logf}&=\liminf_{\epsilon \to 0}\frac{\inf_{\diam(P) \leq \epsilon} h_\mu(T,P)}{\logf}.
\end{align*} 
\end{cor}

To connect ergodic theory and  metric mean dimension theory, Lindenstrauss and Tsukamoto \cite{lt18} introduced the concept of $L^{\infty}$ rate-distortion function,  and  proved  variational principles for metric mean dimension in terms of  $L^{\infty}$ rate-distortion function. The following corollary offers another bridges between information theory and entropy theory of dynamical systems.
\begin{cor}
Let $(X,T)$ be a TDS with a metric $d$. Then for  every $\mu \in E(X,T)$, one has
\begin{align*}
h_{top}^P(T,G_\mu)=h_\mu(T)&=\lim_{\epsilon \to 0 }R_{\mu,L^{\infty}}(T,\epsilon)=\sup_{\epsilon >0}R_{\mu,L^{\infty}}(T,\epsilon),\\
h_{top}(T,X)&=\sup_{\mu \in E(X,T)}\lim_{\epsilon \to 0 }R_{\mu,L^{\infty}}(T,\epsilon),\\
\overline{\rm {mdim}}_M^P(T,G_\mu,d)&=\overline{\rm rdim}_{L^{\infty}}(T,X,d,\mu),\\
\underline{\rm {mdim}}_M^P(T,G_\mu,d)&=\underline{\rm rdim}_{L^{\infty}}(T,X,d,\mu),
\end{align*}
where $h_{top}^P(T,G_\mu)=\lim\limits_{\epsilon \to 0}h_{top}^P(T,G_{\mu},d,\epsilon)$ denotes the packing topological entropy of $G_{\mu}$.
\end{cor}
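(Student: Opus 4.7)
The plan is to deduce all four displayed equalities from Theorem~\ref{thm 1.1} combined with the sandwich estimates in Lemma~\ref{lem 3.2}; no new technical input beyond what is already proved in the paper should be needed.

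First, for the last two lines (the metric mean dimension identities), I would simply observe that $R_{\mu,L^{\infty}}(\epsilon)$ is one of the candidates listed in $\mathcal{D}$. Applying Theorem~\ref{thm 1.1} with this choice of $F(\mu,\epsilon)$ gives
\[
\limsup_{\epsilon \to 0}\frac{R_{\mu,L^{\infty}}(T,\epsilon)}{\logf}=\overline{\rm {mdim}}_M^P(T,G_\mu,d),\qquad \liminf_{\epsilon \to 0}\frac{R_{\mu,L^{\infty}}(T,\epsilon)}{\logf}=\underline{\rm {mdim}}_M^P(T,G_\mu,d).
\]
By the definition of the upper and lower $L^{\infty}$ rate-distortion dimensions recalled in Subsection~\ref{sub 2.2.5}, the left-hand sides are precisely $\overline{\rm rdim}_{L^{\infty}}(T,X,d,\mu)$ and $\underline{\rm rdim}_{L^{\infty}}(T,X,d,\mu)$, which is what is asserted.

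Next, for the first line, I would extract from Lemma~\ref{lem 3.2} the chain
\[
\overline{h}_\mu^{BK}(T,\epsilon)\leq h_{top}^P(T,G_\mu,d,\tfrac{\epsilon}{10})\leq R_{\mu,L^{\infty}}(T,\tfrac{\epsilon}{60})\leq \overline{h}_\mu^{BK}(T,\tfrac{\epsilon}{120})
\]
and let $\epsilon \to 0$. By the Brin-Katok entropy formula, both outer terms tend to $h_\mu(T)$, and hence the two middle terms do as well. Recalling the standard convention $h_{top}^P(T,G_\mu)=\lim_{\epsilon \to 0}h_{top}^P(T,G_\mu,d,\epsilon)$ (valid because $h_{top}^P(T,G_\mu,d,\epsilon)$ is monotone in $\epsilon$), this yields $h_{top}^P(T,G_\mu)=h_\mu(T)=\lim_{\epsilon \to 0}R_{\mu,L^{\infty}}(T,\epsilon)$. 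The middle equation then follows by substituting $h_\mu(T)=\lim_{\epsilon\to 0}R_{\mu,L^{\infty}}(T,\epsilon)$ into the classical variational principle $h_{top}(T,X)=\sup_{\mu\in E(X,T)}h_\mu(T)$.

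No single step is a genuine obstacle: everything is in place once Theorem~\ref{thm 1.1} and Lemma~\ref{lem 3.2} are available. The only minor points I would verify are the monotonicity in $\epsilon$ of $h_{top}^P(T,G_\mu,d,\epsilon)$ and $R_{\mu,L^{\infty}}(T,\epsilon)$, so that the limits $\lim_{\epsilon \to 0}$ appearing in the sandwich are well-defined, and the fact that Brin-Katok applies to ergodic $\mu$ for both the upper and lower local $\epsilon$-entropies; both are routine from the definitions already recorded in Section~2.
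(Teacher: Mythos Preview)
Your proposal is correct and follows essentially the same route as the paper's own proof: the last two identities are obtained as a direct consequence of Theorem~\ref{thm 1.1} applied to $F(\mu,\epsilon)=R_{\mu,L^{\infty}}(\epsilon)\in\mathcal{D}$, the first line is deduced from the sandwich in Lemma~\ref{lem 3.2} together with the Brin--Katok entropy formula, and the second line is the classical variational principle. Your write-up is in fact slightly more detailed than the paper's, which dispatches the whole corollary in three sentences.
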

\begin{proof}
The first  equality follows by Lemma  \ref{lem 3.2} and Brin-Katok entropy formula in ergodic case \cite{bk83}. The second one is due to the classical variational principle for topological entropy \cite{w82}. The last two equalities are a direct consequence of Theorem  \ref{thm 1.1}.
\end{proof}

Ornstein and Weiss \cite{ow93} proved that  the measure-theoretic entropy of an ergodic  measure $\mu$ with respect to the finite  Borel measurable  partition $Q$ can be equivalently given by 
$$h_{\mu}(T,Q)=\lim\limits_{n\to \infty}\frac{\log R_n(x,Q)}{n}$$
for $\mu$-a.e. $x \in X$, where  $R_n(x,Q)=\inf\{j\geq 1:T^jx\in Q_n(x)\}$ is the first  return time of $x$ with respect to $Q_n$, and $Q_n(x)$ is the atom of  the partition $\vee_{j=0}^{n-1}T^{-j}Q$  to which $x$ belongs.

It turns out that the measure-theoretic entropy of $\mu$ equals to  the supremum of the exponential growth rates of Poincar\'e recurrences over all  finite Borel measurable partitions of $X$. See \cite{ow93,var09,dw04} for more details about  the links between measure-theoretic entropy  and Poincar\'e recurrence. The  next corollary suggests that packing metric mean dimension  can be also  expressed as Poincar\'e recurrences. 
\begin{cor}
Let $(X,T)$ be a TDS with a metric $d$. Then for  every $\mu \in E(X,T)$,
\begin{align*}
\overline{\rm {mdim}}_M^P(T,G_\mu,d)&= \limsup_{\epsilon \to 0}\frac{1}{\logf}\inf_{\diam (Q) \leq \epsilon}\limits\int \lim\limits_{n\to \infty}\frac{\log R_n(x,Q)}{n}d\mu,\\
\underline{\rm {mdim}}_M^P(T,G_\mu,d)&= \liminf_{\epsilon \to 0}\frac{1}{\logf}\inf_{\diam (Q) \leq \epsilon}\limits\int \lim\limits_{n\to \infty}\frac{\log R_n(x,Q)}{n}d\mu.
\end{align*}
\end{cor}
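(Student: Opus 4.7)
The plan is to reduce the identity to Theorem \ref{thm 1.1} applied to the Kolmogorov--Sinai $\epsilon$-entropy candidate $F(\mu,\epsilon)=\inf_{\diam(P)\leq \epsilon} h_\mu(T,P)$ already listed in $\mathcal{D}$. The only missing ingredient is to recognize that, for ergodic $\mu$, the inner $\int \lim_{n}\frac{\log R_n(x,Q)}{n}\,d\mu$ is nothing but $h_\mu(T,Q)$, after which taking the infimum over partitions of diameter at most $\epsilon$ and then $\limsup_{\epsilon\to 0}$ (resp.\ $\liminf_{\epsilon\to 0}$) of $1/\logf$ times the result is exactly one of the two formulas in Theorem \ref{thm 1.1}.

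The first step is the Ornstein--Weiss theorem quoted in the paragraph preceding the statement: for every finite Borel measurable partition $Q$ and $\mu$-a.e.\ $x\in X$,
\begin{equation*}
\lim_{n\to\infty}\frac{\log R_n(x,Q)}{n}=h_\mu(T,Q).
\end{equation*}
Since $\mu$ is ergodic, $h_\mu(T,Q)$ is a constant, so the $\mu$-a.e.\ limit is $\mu$-integrable (it is a constant function) and
\begin{equation*}
\int\lim_{n\to\infty}\frac{\log R_n(x,Q)}{n}\,d\mu=h_\mu(T,Q).
\end{equation*}

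Taking the infimum over all finite Borel partitions $Q$ with $\diam(Q)\leq\epsilon$ therefore gives
\begin{equation*}
\inf_{\diam(Q)\leq\epsilon}\int\lim_{n\to\infty}\frac{\log R_n(x,Q)}{n}\,d\mu=\inf_{\diam(Q)\leq\epsilon} h_\mu(T,Q).
\end{equation*}
Dividing by $\logf$ and taking $\limsup_{\epsilon\to 0}$ (respectively $\liminf_{\epsilon\to 0}$), Theorem \ref{thm 1.1} with $F(\mu,\epsilon)=\inf_{\diam(P)\leq\epsilon} h_\mu(T,P)\in \mathcal{D}$ identifies the right-hand side with $\overline{\rm {mdim}}_M^P(T,G_\mu,d)$ (respectively $\underline{\rm {mdim}}_M^P(T,G_\mu,d)$), which is exactly the claim.

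There is no real obstacle here: once Theorem \ref{thm 1.1} is available, the corollary reduces to the a.e.\ Ornstein--Weiss formula combined with ergodicity (which removes any need to exchange limit and integral beyond integrating a constant). The only mild point to watch is that one must work with ergodic $\mu$ so that the pointwise limit is $\mu$-a.e.\ constant; for general invariant $\mu$ one would need a dominated-convergence argument and an integrated version of Ornstein--Weiss, but this is not required here.
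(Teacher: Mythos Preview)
Your proposal is correct and follows exactly the route the paper signals: the corollary is stated immediately after recalling the Ornstein--Weiss formula $h_\mu(T,Q)=\lim_n \frac{\log R_n(x,Q)}{n}$ $\mu$-a.e., so the intended argument is precisely to integrate this constant limit, take the infimum over $\diam(Q)\leq\epsilon$, and invoke Theorem~\ref{thm 1.1} with $F(\mu,\epsilon)=\inf_{\diam(P)\leq\epsilon}h_\mu(T,P)\in\mathcal{D}$. The paper does not spell out a proof, but your reduction is the evident one it has in mind.
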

Given an ergodic measure  $\mu \in E(X,T)$, in \cite{b73,p97} the authors established  an equality  stating that  measure-theoretic entropy of $\mu$ is the infimum of the Bowen topological entropy of Borel sets of $X$ with $\mu$-full measure: $$h_\mu(T)=\inf\{h_{top}^{B}(T,Z): \mu(Z)=1\},$$ where $h_{top}^{B}(T,Z)$ denotes the Bowen topological entropy of $Z$. This equality is known as  \emph{inverse  variational principle} of Bowen topological entropy by Pesin \cite{p97} in his monograph.  Readers can turn to \cite{p97,chz13,cls21} for more results of this aspect.  The following  inverse  variational principles   exhibit some new connections between ergodic theory and metric mean dimension theory. 

\begin{cor}
Let $(X,T)$ be a TDS with a metric $d$. Suppose that  $\mu \in E(X,T)$. Then  for every $F(\mu,\epsilon)\in \mathcal{D}$, 
\begin{align*}
\overline{\rm {mdim}}_M(F,\mu)
&=\min\{\overline{\rm {mdim}}_M^P(T,Z,d): \mu(Z)=1\},\\
\underline{\rm {mdim}}_M(F,\mu)
&=\min\{\underline{\rm {mdim}}_M^P(T,Z,d): \mu(Z)=1\}.
\end{align*}

\end{cor}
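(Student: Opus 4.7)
The plan is to combine Theorem \ref{thm 1.1} with Lemma \ref{lem 3.2}, sandwiching the right-hand infimum between two quantities that already coincide. Since $\mu$ is ergodic, Birkhoff's ergodic theorem yields $\mu(G_\mu)=1$, so $G_\mu$ itself is an admissible competitor in the infimum on the right. Combined with Theorem \ref{thm 1.1}, this immediately gives
\begin{align*}
\inf\{\overline{\rm {mdim}}_M^P(T,Z,d):\mu(Z)=1\} \leq \overline{\rm {mdim}}_M^P(T,G_\mu,d)=\overline{\rm {mdim}}_M(F,\mu),
\end{align*}
and the analogous statement for the lower version. This direction also justifies the final ``attainability'' remark, since the value at $Z=G_\mu$ already equals the infimum.

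For the reverse inequality, I would exploit the freedom in the choice of $F\in\mathcal{D}$ and work with the convenient representative $F(\mu,\epsilon)=\overline{h}_\mu^{BK}(T,\epsilon)$; by Theorem \ref{thm 1.1} this produces the same upper and lower measure-theoretic metric mean dimensions as any other element of $\mathcal{D}$. Lemma \ref{lem 3.2} then supplies, for every $\epsilon>0$ and every Borel $Z$ with $\mu(Z)=1$, the chain
$$\overline{h}_\mu^{BK}(T,\epsilon)\leq\PP_\mu(T,d,\epsilon/10)\leq h_{top}^P(T,Z,d,\epsilon/10).$$
Dividing by $\log(1/\epsilon)$ and using $\log(1/\epsilon)/\log(10/\epsilon)\to 1$ as $\epsilon\to 0$, passing to $\limsup$ gives $\overline{\rm {mdim}}_M(F,\mu)\leq\overline{\rm {mdim}}_M^P(T,Z,d)$ for every such $Z$, and taking the infimum over admissible $Z$ closes the upper equality. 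Replacing $\limsup$ with $\liminf$ throughout handles the lower mean-dimension statement verbatim.

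No serious obstacle is anticipated: the argument is a direct repackaging of machinery already in place. The only bookkeeping to check is the harmless rescaling $\log(1/\epsilon)\sim\log(10/\epsilon)$ as $\epsilon\to 0$, which follows from $\log(10/\epsilon)=\log(1/\epsilon)+\log 10$. The substantive content, namely controlling a Brin--Katok-type $\epsilon$-entropy by the packing $\epsilon$-entropy on arbitrary sets of full $\mu$-measure, has already been carried out inside Lemma \ref{lem 3.2}, while Theorem \ref{thm 1.1} harmonizes all elements of $\mathcal{D}$ so that the identity established for one representative transfers automatically to every member of the candidate set.
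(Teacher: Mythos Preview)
Your proposal is correct and matches exactly the route the paper implicitly intends: the corollary is stated without proof precisely because it is a direct consequence of Theorem \ref{thm 1.1} together with the chain in Lemma \ref{lem 3.2}, using $\mu(G_\mu)=1$ to witness attainability and the inequality $\overline{h}_\mu^{BK}(T,\epsilon)\leq \inf\{h_{top}^P(T,Z,d,\epsilon/10):\mu(Z)=1\}$ for the reverse bound. Your handling of the harmless scale change $\epsilon\mapsto\epsilon/10$ via $\log(10/\epsilon)/\log(1/\epsilon)\to 1$ is the only bookkeeping needed, and it is exactly what the paper's repeated use of such rescalings (throughout Lemmas \ref{lem 3.1} and \ref{lem 3.2}) already presumes.
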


\begin{proof}
By Lemmas \ref{lem 3.1} and  \ref{lem 3.2},   $\overline{\rm {mdim}}_M(F,\mu)
\leq\overline{\rm {mdim}}_M^P(T,Z,d)$ for each Borel set $Z$ with $\mu(Z)=1$, and the  set   $G_{\mu}$ is the one that realizes the infimum.
\end{proof}
As we all know,  the equalities $$h_{top}^B(T,G_\mu)=h_{top}^P(T,G_\mu)=h_\mu (T)$$  hold for every  $\mu \in E(X,T)$ \cite{b73,dzz20}. Unfortunately, we only get such equalities for Bowen and packing metric mean dimensions  under an additional condition.

\begin{cor}\label{cro 3.6}
Let $(X,T)$ be a TDS with a metric $d$. For  every $\mu \in E(X,T)$ and $F(\mu,\epsilon)\in \mathcal{D}$,
\begin{align*}
\limsup_{\epsilon \to 0}\frac{\underline{h}_\mu^{BK}(T,\epsilon)}{\logf}&=\limsup_{\epsilon \to 0}\frac{M_{\mu}(T,d,\epsilon)}{\logf}\leq 
\overline{\rm {mdim}}_M^B(T,G_\mu,d)\\
&\leq \overline{\rm {mdim}}_M^P(T,G_\mu,d)=\limsup_{\epsilon \to 0}\frac{F(\mu,\epsilon)}{\logf}.
\end{align*}
Consequently, if 
$\mu $  satisfies that
$\limsup_{\epsilon \to 0}\frac{\overline{h}_\mu ^{BK}(T,\epsilon)}{\logf}=\limsup_{\epsilon \to 0}\frac{\underline{h}_\mu ^{BK}(T,\epsilon)}{\logf},$
then  for every $F(\mu,\epsilon)\in \mathcal{D}\cup\left\{\underline{h}_\mu^{BK}(T,\epsilon),M_{\mu}(T,d,\epsilon)\right\}$, 
\begin{align*}
\overline{\rm {mdim}}_M^B(T,G_\mu,d)
=\overline{\rm {mdim}}_M^P(T,G_\mu,d)
=\limsup_{\epsilon \to 0}\frac{F(\mu,\epsilon)}{\logf}.
\end{align*}
\end{cor}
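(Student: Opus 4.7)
The plan is to chain together the four comparisons appearing in the displayed inequality and then collapse the chain under the extra hypothesis using Theorem~\ref{thm 1.1}.

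First I would establish the leftmost two inequalities by a direct appeal to Proposition~\ref{prop 3.4}. That proposition furnishes $\underline{h}_\mu^{BK}(T,\epsilon) \leq M_\mu(T,d,\epsilon/2)$; dividing by $\logf$ and taking $\limsup_{\epsilon \to 0}$ yields the first inequality, since $\log(2/\epsilon)/\logf \to 1$ absorbs the scale shift. For the second, ergodicity of $\mu$ gives $\mu(G_\mu) = 1$, and the upper bound in Proposition~\ref{prop 3.4} then reads $M_\mu(T,d,\epsilon/2) \leq h^B_{top}(T,G_\mu,d,\epsilon/2)$; the same normalization argument delivers $\limsup_{\epsilon \to 0} M_\mu(T,d,\epsilon)/\logf \leq \overline{\rm {mdim}}_M^B(T,G_\mu,d)$.

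For the third inequality $\overline{\rm {mdim}}_M^B(T,G_\mu,d) \leq \overline{\rm {mdim}}_M^P(T,G_\mu,d)$ I would invoke the standard comparison of Hausdorff-type and packing-type Carath\'eodory--Pesin set functions at each scale $\epsilon$, namely $h^B_{top}(T,G_\mu,d,\epsilon) \leq h^P_{top}(T,G_\mu,d,\epsilon)$ (the analogue of the Feng--Huang inequality between Bowen and packing topological entropies), which passes to metric mean dimensions after dividing by $\logf$ and taking $\limsup$. The closing equality in the chain, $\overline{\rm {mdim}}_M^P(T,G_\mu,d) = \limsup_{\epsilon \to 0} F(\mu,\epsilon)/\logf$, is precisely Theorem~\ref{thm 1.1} applied to an arbitrary candidate $F \in \mathcal{D}$.

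For the consequence, note that $\overline{h}_\mu^{BK}(T,\epsilon) \in \mathcal{D}$, so Theorem~\ref{thm 1.1} gives $\limsup_{\epsilon \to 0} \overline{h}_\mu^{BK}(T,\epsilon)/\logf = \overline{\rm {mdim}}_M^P(T,G_\mu,d)$. Under the stated hypothesis, $\limsup_{\epsilon \to 0} \underline{h}_\mu^{BK}(T,\epsilon)/\logf$ takes the same value, and substituting this into the already-established chain squeezes both intermediate quantities $\limsup_{\epsilon \to 0} M_\mu(T,d,\epsilon)/\logf$ and $\overline{\rm {mdim}}_M^B(T,G_\mu,d)$ to the common limit. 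This yields the asserted coincidence for every $F \in \mathcal{D} \cup \{\underline{h}_\mu^{BK}(T,\epsilon), M_\mu(T,d,\epsilon)\}$. I do not anticipate any serious obstacle; the only care required is the trivial rescaling bookkeeping mentioned above, plus the remark that although Proposition~\ref{prop 3.4} is stated for general $\mu \in M(X)$, the identification $\mu(G_\mu) = 1$ needed in step two genuinely uses ergodicity.
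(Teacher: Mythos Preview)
Your proposal is correct and matches the paper's implicit argument; the paper states this corollary without proof, treating it as an immediate assembly of Proposition~\ref{prop 3.4}, Theorem~\ref{thm 1.1}, and the standard comparison $h_{top}^B\leq h_{top}^P$ (cited in the paper's Example via \cite[Proposition~3.4(c)]{ycz22}). Your handling of the $\epsilon\mapsto\epsilon/2$ rescaling and the use of $\mu(G_\mu)=1$ from ergodicity are exactly the bookkeeping the paper leaves to the reader.
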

\begin{proof}
The inequality $\limsup_{\epsilon \to 0}\frac{\underline{h}_\mu^{BK}(T,\epsilon)}{\logf}=\limsup_{\epsilon \to 0}\frac{M_{\mu}(T,d,\epsilon)}{\logf}\leq 
\overline{\rm {mdim}}_M^B(T,G_\mu,d)$ is guaranteed by Theorem \ref{thm 3.7}.   The inequality $\overline{\rm {mdim}}_M^B(T,G_\mu,d) \leq \overline{\rm {mdim}}_M^P(T,G_\mu,d)$ dues to  \cite[Proposition 3.4]{ycz22}. The last equality follows by Theorem \ref{thm 1.1}.
\end{proof}

\begin{ex}
Let $\sigma:[0,1]^{\mathbb{Z}}\rightarrow [0,1]^{\mathbb{Z}}$ be the left shift on alphabet $[0,1]$, where $[0,1]$ is the unit interval with the standard metric, and let $d(x,y)=\sum_{n\in \mathbb{Z}}2^{-|n|}|x_n-y_n|$ be the metric defined on 
$[0,1]^{\mathbb{Z}}$. Endow $[0,1]^{\mathbb{Z}}$ with product measure $\mu=\mathcal{L}^{\otimes \mathbb{Z}}$, where $\mathcal{L}$  is  the  Lebesgue measure  on $[0,1]$. 

The authors showed that \rm{\cite[Example 3.24]{ycz22}}
$$\limsup_{\epsilon \to 0}\limits  \frac{\overline h_{\mu}^{BK}(\sigma,\epsilon)}{\log \frac{1}{\epsilon}}=\limsup_{\epsilon \to 0} \limits \frac{\underline h_{\mu}^{BK}(\sigma,\epsilon)}{\log \frac{1}{\epsilon}}=\overline{\rm {mdim}}_M^B(\sigma,G_{\mu},d)=1.$$ It is well-known \cite{lt18} that  $\overline{\rm {mdim}}_M(\sigma,[0,1]^{\mathbb{Z}},d)=1$. By \cite[Proposition 3.4, (c)]{ycz22}, we have  $$1=\overline{\rm {mdim}}_M^B(\sigma,G_{\mu},d)\leq\overline{\rm {mdim}}_M^P(\sigma,G_{\mu},d)\leq \overline{\rm {mdim}}_M(\sigma,[0,1]^{\mathbb{Z}},d)=1.$$
This shows that
$$\limsup_{\epsilon \to 0}\limits  \frac{\overline h_{\mu}^{BK}(\sigma,\epsilon)}{\log \frac{1}{\epsilon}}=\limsup_{\epsilon \to 0} \limits \frac{\underline h_{\mu}^{BK}(\sigma,\epsilon)}{\log \frac{1}{\epsilon}}=\overline{\rm {mdim}}_M^P(\sigma,G_{\mu},d)=1.$$

\end{ex}

\section{Open questions} 
~~~~Finally, we  end up this paper by  giving  some open questions arising  from the present paper and  existing  for  a  long time in metric mean dimension theory, which enjoys some special attentions by many researchers in this  aspect. We believe that  more efforts toward these   questions shall   give  us a good chance of understanding the  geometric  and topological  structures of infinite entropy systems.
\begin{enumerate}
\item [(1).] Given a TDS  $(X,T)$, does 
$$\limsup_{\epsilon \to 0}\frac{\underline{h}_\mu^{BK}(T,\epsilon)}{\logf}= \limsup_{\epsilon \to 0}\frac{\overline{h}_\mu^{BK}(T,\epsilon)}{\logf}$$
hold for every $\mu \in E(X,T)$?
\item [(2).] Do there exist  $c>0$ and  $F(\mu,\epsilon)\in \mathcal{D}$ such that $F(\mu,c\epsilon)\leq \underline{h}_\mu^{BK}(T,\epsilon)$ for every $\epsilon >0$ and $\mu \in E(X,T)$?
\item [(3).] Under which condition on $(X,T)$, is there a $F(\mu,\epsilon)\in \mathcal{D}$ so that
$$\over=\sup_{\mu \in E(X,T)}\limsup_{\epsilon \to 0}\frac{F(\mu,\epsilon)}{\logf}?$$
\end{enumerate}

We remark that  Question 1 is an analogue of  classical  Brin-Katok entropy formula.  Question 2 implies  Question 1 by Theorem \ref{thm 1.1}, and  allows us   to obtain ergodic  variational principles for metric mean dimension in terms of $\underline{h}_\mu^{BK}(T,\epsilon)$ by Theorems \ref{thm 1.2} and  \ref{thm 1.3}. Hence, the central question is how to solve Question 2. There is a possible way of modifying the  Bowen's work  \cite[Lemma 1]{b73}  and using Proposition \ref{prop 3.6} to obtain   $\inf_{\diam (P)\leq c\epsilon}\limits h_{\mu}(T,P)\leq \underline{h}_\mu^{BK}(T,\epsilon)$ for some constant $c>0$, but   so far after several attempts the authors  still  failed to  get   the  inequality. 

The  hardcore of Question 3 is whether  we can  exchange the  order of $\limsup_{\epsilon \to 0}$ and $\sup_{\mu \in E(X,T)}$ within variational principles of metric mean dimensions in terms of some measure-theoretic $\epsilon$-entropies. 
We summarize the progress  of this aspect.

\begin{itemize}
\item [(i)]   Lindenstrauss and Tsukamoto firstly posed counter-examples (see \cite[Section VIII]{lt18})  showing that the variational  principles are not valid for rate-distortion  functions if we try to exchange the  order of $\limsup_{\epsilon \to 0}$ and $\sup_{\mu \in M(X,T)}$. This  at the same time  stimulates several authors  asking analogous question  for Katok's  $\epsilon$-entropies in their consequent work \cite{vv17, cl23}.
\item [(ii)] If  $F(\mu,\epsilon)=\inf_{\diam P \leq \epsilon}\limits h_{\mu}(T,P)$, for systems with marker property, for instance,   aperiodic  minimal systems and  aperiodic finite-dimensional  systems(cf. \cite[Theorem 6.1]{g15}), the authors \cite[Theorem 1.3]{ycz23} showed that, for Theorem \ref{thm 1.2}, the order of   $\limsup_{\epsilon \to 0}$ and $\sup_{\mu \in M(X,T)}$  can be exchanged for some ``nice''  compatible metrics.
\item  [(iii)]  Inspired by the work  of Ruelle  \cite{rue72} and Walters \cite{w82}, by using convex analysis  approach the authors   \cite[Theorem 1.1]{ycz22b}   firstly  formulated  a variational principle for upper metric mean dimension with potential in terms of upper measure-theoretic metric mean dimension (cf. \cite[Definition 3.4]{ycz22b} for its precise definition), whose form is close to the classical variational principle of topological entropy \cite{w82}. This result was further extended to  random dynamical systems \cite[Theorem 1.1]{ycz22c},  scaled pressure functions \cite[Theorems A and B]{cpv24} and other generalized framework within metric mean dimension theory.

\item  [(iv)]Another significant progress is due to   Carvalho, Pessil and Varandas \cite{cpv24}.  Since the  upper measure-theoretic metric mean dimensions of invariant measures, introduced in \cite{ycz22b,ycz22c,cpv24},  are difficult to compute its precise value except some   specific dynamical systems, they provided  a new strategy  of  formulating the measure-theoretic metric mean dimensions  of invariant measures,  and  established variational principles for metric mean dimension in terms of  a modified type of  Katok's   $\epsilon$-entropy  and  the local metric mean dimension function  of invariant measures \cite[Theorems C and E]{cpv24}.  Furthermore,  for the candidate $F(\mu,\epsilon)=\overline{h}_\mu^K(T,\epsilon,\delta)$,  an example \cite[Example 10.3]{cpv24} is exhibited  to  verify that   the order of   $\limsup_{\epsilon \to 0}$ and $\sup_{\mu \in M(X,T)}$  can not  be exchanged, which  gives a negative answer to  the previous  question.   
\end{itemize}

Based on the aforementioned facts, we can conclude that in general  the  order of $\limsup_{\epsilon \to 0}$ and $\sup_{\mu \in M(X,T)}$  of  the variational principles in Theorems \ref{thm 1.2} and \ref{thm 1.3} can not be  exchanged for  any TDS.  This is the reason why we pose the Question 3.  Notice from the Theorems \ref{thm 1.2} and \ref{thm 1.3} that  metric mean dimension can be expressed by different types of measure-theoretic $\epsilon$-entropies. This naturally leads to  another interesting question: 

 \emph{For  what  type of  special dynamical systems, the various type of variational principles  of metric mean  dimension, given  in Theorems \ref{thm 1.2} and \ref{thm 1.3}, are  unique for ergodic case?}
 
The uniqueness means that once  some potential conditions on TDSs are   confirmed for Question 3, then for such TDSs,  by Theorem \ref{thm 1.1},  all variational principles   are essentially reduced to the  following form:
\begin{align*}
	\over=\sup_{\mu \in E(X,T)}\limsup_{\epsilon \to 0}\frac{1}{\logf}\inf_{\diam P \leq \epsilon}\limits h_{\mu}(T,P).
\end{align*}

\subsection*{Acknowledgements}

~~~~We sincerely thank  the anonymous referee for  abundant valuable  comments  and  insightful suggestions that greatly improved the  quality of the paper. The second and third authors were supported by the
National Natural Science Foundation of China (Nos.12071222 and 11971236). The work was also funded by the Priority Academic Program Development of Jiangsu Higher Education Institutions.  We  also would like to express our gratitude to Tianyuan Mathematical Center in Southwest China(No.11826102), Sichuan University and Southwest Jiaotong University for their support and hospitality.


\begin{thebibliography}{HD82}




\normalsize
\baselineskip=17pt

\bibitem[Bow73]{b73} R. Bowen,  \emph{Topological entropy for noncompact sets},  Trans. Amer. Math. Soc. 184 (1973), 125-136.

\bibitem[BK83]{bk83}M. Brin and A. Katok,  \emph{On local entropy},  Geometric dynamics (Rio de Janeiro),  Lecture Notes in Mathematics,  Springer, Berlin  1007 (1983), 30-38.

\bibitem[CHZ13]{chz13} Y. Cao, H. Hu and Y. Zhao, \emph{Nonadditive measure-theoretic pressure and applications to dimensions of an ergodic measure}, Erg. Theory Dynam. Syst. 33 (2013), 831-850.


\bibitem[CPV24]{cpv24} M. Carvalho, G.  Pessil and P. Varandas, \emph{A convex analysis approach to the metric mean dimension: limits of scaled pressures and variational principles}, Adv. Math. 436 (2024), Paper No. 109407, 54 pp.



\bibitem[CLS21]{cls21} D. Cheng, Z. Li and B. Selmi, \emph{Upper metric mean dimensions with potential on subsets}, Nonlinearity 34 (2021), 852-867.

\bibitem[CL23]{cl23} D. Cheng and Z. Li, \emph{Scaled pressure of dynamical systems}, J. Diff. Equ. 342 (2023), 441-471.

\bibitem[CT06]{ct06} T. M. Cover and J. A. Thomas, \emph{Elements of Information Theory}, second edition, Wiley, New York, 2006.

\bibitem[DZZ20]{dzz20} D. Dou,  D. Zheng and X. Zhou,  \emph{Packing topological entropy for amenable group actions}, Erg. Theory Dynam. Syst. 43 (2023), 480-514.

\bibitem[DW04]{dw04}T. Downarowicz  and B. Weiss, \emph{Entropy theorems along times when $x$ visits a set}, Illinois J. Math. 48 (2004), 59-69.

\bibitem[FH12]{fh12} D.  Feng and W. Huang, \emph{Variational principles for topological entropies of subsets}, J. Funct. Anal. 263 (2012), 2228-2254.

\bibitem[Gro99]{gromov} M. Gromov,  \emph{Topological invariants of dynamical systems and spaces of holomorphic maps: I}, Math. Phys, Anal. Geom. 4 (1999), 323-415.

\bibitem[Gut15]{g15} Y. Gutman, \emph{Mean dimension and Jaworski-type theorems}, Proc. Lond. Math. Soc. 111 (2015),  831-850.

\bibitem[Gut17]{g17} Y. Gutman, \emph{Embedding topological dynamical systems with periodic points in cubical shifts}, Ergodic Theory Dynam. Syst. 37 (2017), 512-538.

\bibitem[GLT16]{glt16} Y. Gutman, E. Lindenstrauss and M. Tsukamoto, \emph{Mean dimension of $\mathbb{Z}^{k}$ actions}, Geom. Funct. Anal. 26 (2016),  778-817.

\bibitem[GS21]{gs20} Y. Gutman and A. \'Spiewak,  \emph{Around the variational principle for metric mean dimension},   Studia Math. 261 (2021), 345-360.

\bibitem[GT20]{gt20}  Y. Gutman and M. Tsukamoto, Embedding minimal dynamical systems into Hilbert cubes, \emph {Invent. Math.} {221} (2020), 113-166.

\bibitem[Kat80]{k80} A. Katok,  \emph{Lyapunov exponents, entropy and periodic orbits for diffeomorphisms}, Publ. Math. Inst. Hautes \'Etudes Sci. 51 (1980), 137-173.


\bibitem[LT18]{lt18} E. Lindenstrauss  and M. Tsukamoto, \emph{From rate distortion theory to metric mean dimension: variational principle}, IEEE Trans. Inform. Theory 64 (2018), 3590-3609.

\bibitem[LW00]{lw00} E. Lindenstrauss and B. Weiss,  \emph{Mean topological dimension}, Israel  J. Math. 115 (2000), 1-24. 

\bibitem[Mat95]{m95} P. Mattila,  \emph{The geometry of sets and measures in Euclidean spaces}, Cambridge University Press, 1995.  

\bibitem[MW08]{mw08} J. Ma and Z. Wen,  \emph{A Billingsley type theorem for Bowen entropy}, Comptes Rendus Mathematique 346 (2008),  503-507.

\bibitem[OW93]{ow93}D. Ornstein and B. Weiss, \emph{Entropy and data compression schemes}, IEEE Trans. Inform. Theory 39 (1993),  78-83.


\bibitem[Pes97]{p97} Y.B. Pesin, \emph{Dimension theory in dynamical systems}, University of Chicago Press, 1997.

\bibitem[Rue72]{rue72} D. Ruelle, \emph{Statistical mechanics on a compact set with $\mathbb{Z}^{\nu}$ action satisfying expansiveness and specification},  Bull. Amer. Math. Soc. 78 (1972), 988-991.

\bibitem[PS07]{ps07} C.  Pfister and W. Sullivan, \emph{On the topological entropy of saturated sets}, Erg. Theory Dynam. Syst.  27 (2007), 929-956.

\bibitem[S07]{s07} U. Shapira, \emph{Measure theoretical entropy of covers}, Israel J. Math. 158 (2007), 225-247.

\bibitem[S22]{shi} R. Shi, \emph{On variational principles for metric mean dimension},  IEEE Trans. Inform. Theory 68 (2022), 4282-4288. 

\bibitem[Var09]{var09} P. Varandas,  \emph{Entropy and Poincar\'e recurrence from a geometrical viewpoint}, Nonlinearity 22 (2009), 2365-2375.

\bibitem[VV17]{vv17}  A. Velozo and R. Velozo,   \emph{Rate distortion theory, metric mean dimension and measure theoretic entropy}, arXiv:1707.05762(2017).

\bibitem[Wal82]{w82} P. Walters,  \emph{An introduction to ergodic theory}, Springer, 1982. 

\bibitem[W21a]{w21} T. Wang, \emph{Variational relations for metric mean dimension and rate distortion dimension},  Discrete Contin. Dyn. Syst. 27 (2021),  4593-4608.

\bibitem[W21b]{wa21} T. Wang, \emph{Some notes on topological and measure-theoretic entropy}, Qual. Theory Dyn. Syst. {20} (2021), 1-13.

\bibitem[YCZ22a]{ycz22} R. Yang, E. Chen and X. Zhou, \emph{Bowen's  equations for upper metric mean dimension with potential}, Nonlinearity 35 (2022),  4905-4938.
\bibitem[YCZ22b]{ycz22b} R. Yang, E. Chen and X. Zhou, \emph{On variational principle for upper metric mean dimension with potential}, arXiv:2207.01901(2022). 

\bibitem[YCZ22c]{ycz22c} R. Yang, E. Chen and X. Zhou,   \emph{On Ruelle-Walters formula of random metric mean dimension}, arXiv:2210.13126(2022).

\bibitem[YCZ23]{ycz23} R. Yang, E. Chen and X. Zhou,   \emph{Some notes on variational principle for metric mean dimension}, IEEE Trans. Inform. Theory 69 (2023),  2796-2800.


\end{thebibliography}
\end{document}